\renewcommand{\maketitle}{\bgroup\setlength{\parindent}{0pt}
\begin{flushleft}
  \textbf{\@title}

  \@author
\end{flushleft}\egroup
}
\newtheorem{theorem}{Theorem}
\newtheorem{lemma}[theorem]{Lemma}
\newtheorem{proposition}[theorem]{Proposition}
\newcommand{\qed}{\hspace*{\fill} $\Box$ \vspace{2ex}}
\def\btheo{\begin{theorem}}
\def\etheo{\end{theorem}}
\def\bprop{\begin{proposition}}
\def\eprop{\end{proposition}}
\def\bexam{\begin{example}}
\def\eexam{\end{example}}
\def\bdefi{\begin{definition}}
\def\edefi{\end{definition}}
\def\blemm{\begin{lemma}}
\def\elemm{\end{lemma}}
\newcommand{\mbb}{\mathbb}
\newcommand{\mcal}{\mathcal}
\newcommand{\sta}{\textrm{sta}}
\def\det{\textrm{det}}
\def\wand{\textrm{ and }}
\def\wotherwise{\textrm{otherwise}}
\def\[#1\]{\begin{align}#1\end{align}}
\def\bcase{\begin{cases}}
\def\ecase{\end{cases}}
\def\bpmat{\begin{pmatrix}}
\def\epmat{\end{pmatrix}}
\def\bbmat{\begin{bmatrix}}
\def\ebmat{\end{bmatrix}}
\def\beqn{\begin{eqnarray}}
\def\eeqn{\end{eqnarray}}
\def\beqnx{\begin{eqnarray*}}
\def\eeqnx{\end{eqnarray*}}
\def\beq{\begin{equation}}
\def\eeq{\end{equation}}
\def\bitem{\begin{itemize}}
\def\eitem{\end{itemize}}
\def\btheo{\begin{theorem}}
\def\etheo{\end{theorem}}
\def\bblock{\begin{block}}
\def\eblock{\end{block}}
\def\benum{\begin{enumerate}}
\def\eenum{\end{enumerate}}
\def\e{\epsilon}
\def\x{\xi}
\def\vP{\varPi}
\def\vX{\varXi}
\def\bbR{\mathbb{R}}
\def\lpa{\left(}
\def\rpa{\right)}
\def\lbc{\left\{}
\def\rbc{\right\}}
\renewcommand{\x}{\chi}
\begin{document}

 \thispagestyle{firststyle}

\title{\bf\Large Canonical dual method for mixed integer fourth-order polynomial minimization problems with  fixed cost terms}
\maketitle

\vspace{3em}

\noindent{{\bf Zhong Jin $\cdot$ David Y Gao}}

\vspace{3em}

\noindent{\textbf{Abstract}}~
we study a canonical duality method to solve a mixed-integer nonconvex fourth-order polynomial minimization problem
with fixed cost terms. This constrained nonconvex problem can be transformed into a continuous concave maximization dual problem without duality gap. The global optimality conditions are proposed and the existence and uniqueness criteria are discussed. Application to a decoupled mixed-integer
problem is illustrated and analytic solution for a global minimum is obtained under some suitable conditions. Several examples are given to show the method is effective.

\vspace{2em}

\noindent{\textbf{Keywords}} ~Canonical duality theory $\cdot$ Mixed-integer programming $\cdot$ Fixed cost objective function $\cdot$ Global optimization

\vspace{2em}

\noindent{\textbf{Mathematics Subject Classification}}~~90C, 49N

\vspace{3em}

\textheight=24cm

\section{Introduction}\label{se:intro}

In this paper, we consider mixed integer minimization problem as follows:

\begin{equation}\label{p:primal}
(\mathcal{P}_b):~~~~\min \{P(x,v)= Q(x)+W(x)-f^Tv~|~(x,v)\in \x_v\}
\end{equation}
where $Q(x)=\frac{1}{2}x^T A x-c^Tx$, $W(x)= \frac{1}{2}\lpa\frac{1}{2}x^T B x-\alpha \rpa^2$, $ A\in\mathbb{R}^{n\times n}$ is a symmetric matrix and $ B\in\mathbb{R}^{n\times n}$ is a symmetric positive semi-definite matrix,  $\alpha>0$ is a real number, $c,f\in \mathbb{R}^n$ are two given vectors, $v\in\{0,1\}^n$ means fixed cost variable, and
\begin{equation}
\x_v=\{(x,v)\in \mathbb{R}^n\times \{0,1\}^n~|~-v\leq x \leq v \}.
\end{equation}

The constraint $x\in [-v,v]$ with $v\in\{0,1\}^n$ is called as fixed charge constraint \cite{Padberg1985}and problem $\mathcal{P}_b$ belongs to the class of fixed charge problems, which have attracted considerable attention and extensive study in the integer programming literature in recent years \cite{Aardal1998,Dukwon1999,Glover2005,Steffen2009,Mathieu2013,Agra2014,Adlakha2015,Housh2015,Gaocost2008}.  This class of problem has
many practical applications in a variety of problems, including facility location\cite{Aardal1998}, network design \cite{Dukwon1999,Steffen2009,Agra2014}, lot-sizing application \cite{Glover2005,Mathieu2013}, and transportation \cite{Mathieu2013,Housh2015,Gaocost2008,Adlakha2015}.The literature provides only a few exact methods for solving fixed charge problems, such as branch-and-bound type
methods, then a good deal of effort has been devoted to
finding approximate solutions to fixed charge problems by heuristic methods. Although these problems can be written as mixed integer programs, their computational requirements usually increase exponentially with
the size of the problem due to the existence of fixed cost terms in its objective function.

Canonical duality theory, as a breakthrough methodological theory, was originally developed from infinite dimensional nonconvex systems \cite{Gaocost2000}. It has been applied successfully for solving a class of challenging problems in global optimization and nonconvex analysis, such
as quadratic problems \cite{FangSC2008,Gao2004,Gao2010}, box constrained problems problems \cite{Gaobox2007a}, polynomial optimization \cite{Zhou2014,Gaobox2007a,Gao2006}, transportation problems
\cite{Gaocost2008}, location problems \cite{Gao2012}, and integer programming problems \cite{FangSC2008,Wang2012}.
 Under some certain conditions, these problems can be solved by canonical
dual method to obtain global minimums of the primal problems.

Moreover, problem $\mathcal{P}_b$ is related with some problems solved by canonical duality method. Particularly, if the fixed cost term $f^Tv=0$ and the feasible space is defined by $\x=\{x\in \mathbb{R}^n~|~\mathscr{L}^l\leq x \leq \mathscr{L}^u\}$ where $\mathscr{L}^l$ and $\mathscr{L}^u$ are two given vectors, then $\mathcal{P}_b$ changes to the nonconvex polynomial programming problem with box constraints in \cite{Gaobox2007a}. If $f^Tv=0$ and the feasible space is simply defined by $\x=\{x\in \mathbb{R}^n~|~x_i =0~or~1, i = 1, \cdots, n\}$, $\mathcal{P}_b$ converts to the simple 0-1 quadratic programming
problem in \cite{FangSC2008}. If $W(x)=0$, $\mathcal{P}_b$ becomes the fixed cost quadratic problem that the canonical duality theory is introduced to solve in \cite{Gaocost2008}, where the existence and uniqueness of global optimal solutions are proved.

In this paper, we design a canonical duality algorithm to solve a mixed-integer nonconvex fourth-order polynomial minimization problem with fixed cost terms. The presented method is inspired by the method introduced by David Y. Gao for fixed cost quadratic programs \cite{Gaocost2008} and provides a nontrivial extension of his work to polynomial minimization problem. Based on the geometrically admissible operators of \cite{Gaobox2007a,Gaocost2008}, we define a new  geometrically admissible operator and transform the fourth-order polynomial minimization problem in $\mathbb{R}^{2n}$ into a continuous concave maximization dual problem in $\mathbb{R}^{n+1}$ without duality gap. We give the global optimality conditions and obtain the existence and uniqueness criteria. Furthermore, we discuss the application to decouple problem. Some numerical experiments are given to show
our method is effective.

This paper is organised as follows. In the next section, the canonical dual problem for $\mathcal{P}_b$ is formulated, and the corresponding complementary-dual principle is showed. The global optimality criteria are put forward in Section 3, and the existence and uniqueness criteria are proposed in Section 4. We apply our method to decoupled problem in Section 5 and illustrate the effectiveness of our method by some examples in Section 6. Some conclusions and further work are given in the last section.

\section{Canonical dual problem}\label{se:cano}
We propose and describe the canonical dual problem of $\mathcal{P}_b$ without
duality gap in this section. First, the box constraints $-v\leq x \leq v$, $v\in \{0,1\}^n$ in the primal problem can be rewritten as relaxed quadratic form:
\begin{equation}\label{constrains}
x\circ x\leq v,~~~~v\circ(v-e)\leq 0,
\end{equation}
where $e=\{1\}^n$ is an n-vector of all ones and notation $x\circ v= (x_1v_1,
x_2v_2,\cdots, x_nv_n)$ denotes the Hadamard product between any two vectors $x,v\in \mathbb{R}^n$. So $\mathcal{P}_b$ can be reformulated to the relaxed problem in the following:
\begin{equation}\label{p:relax}
(\mathcal{P}_r):~~~~\min \{P(x,v)= \frac{1}{2}x^T A x-c^Tx+\frac{1}{2}\lpa\frac{1}{2}x^T B x-\alpha \rpa^2-f^Tv~|~x\circ x\leq v,v\circ(v-e)\leq 0\}.
\end{equation}

In our paper we introduce  a so-called geometrically admissible operator
\begin{equation}\label{operator}
y=\Lambda(x,v)=\bpmat \xi\\ \epsilon \epmat=\bpmat \xi\\ \epsilon_1 \\ \epsilon_2 \epmat=\bpmat
\frac{1}{2}x^T Bx-\alpha\\
x\circ x-v\\
v\circ v-v\\
\epmat
\in\mathbb{R}^{2n+1},
\end{equation}
and it is trivial to see that the constraints (\ref{constrains}) are equivalent to $\epsilon\leq 0$.

Define
\begin{equation}
V(y)=\frac{1}{2}\xi^2+\Psi(\epsilon),
\end{equation}
where
\begin{equation}
     \Psi(\epsilon)=
    \left\{\begin{array}{ll}
    0 &~~ if~~\epsilon\leq 0\in\mathbb{R}^{2n},\\
    +\infty &~~\wotherwise,
    \end{array}\right.
    \end{equation}
and let $U(x,v)=-Q(x)+f^Tv$, then unconstrained canonical form of relaxed primal problem $(\mathcal{P}_r)$
can be obtained:
\begin{equation}\label{pc}
(\mathcal{P}_c):~~~~\min \{\Pi(x,v)= V(\Lambda(x,v))-U(x,v)~|~x\in \mathbb{R}^n,v\in \mathbb{R}^n\}.
\end{equation}

Set $y^*=\left(\begin{array}{l}
\varsigma\\
\sigma
\end{array}\right)=\left(\begin{array}{l}
\varsigma\\
\sigma_1\\
\sigma_2
\end{array}\right)$ be the dual vector of $y=\left(\begin{array}{l}
\xi\\
\epsilon
\end{array}\right)$ with $\epsilon\leq 0$, then sup-Fenchel conjugate of V (y) can be
defined as
\begin{eqnarray}\label{tianjian2002a}
V^\sharp(y^*)&=&\sup_{y\in\bbR^{1+2n}}\{y^Ty^*-V(y)\}\nonumber\\
&=&\sup_{\xi\geq -\alpha}\{\xi\varsigma-\frac{1}{2}\xi^2\}+\sup_{\epsilon_1\in\bbR^{n}}\sup_{\epsilon_2\in\bbR^{n}}\{\epsilon_1^T\sigma_1+\epsilon_2^T\sigma_2-\Psi(\epsilon)\}\nonumber\\
&=&\frac{1}{2}\varsigma^2+\Psi^\sharp(\sigma),~s.t.~\varsigma\geq -\alpha,
\end{eqnarray}
where
\begin{equation}\label{tianjian2002}
     \Psi^\sharp(\sigma)=
    \left\{\begin{array}{ll}
    0 &~~ if~~\sigma_1\geq 0\subseteq\mathbb{R}^{n},\sigma_2\geq 0\subseteq\mathbb{R}^{n}\\
    +\infty &~~\wotherwise.
    \end{array}\right.
    \end{equation}
Now the extended canonical duality
relations can be given from the theory of convex analysis:
\begin{equation}\label{tianjia2004}
y^*\in \partial V(y)~\Leftrightarrow~y\in \partial V^\sharp(y^*)~\Leftrightarrow~V(y)+V^\sharp(y^*)=y^Ty^*,
\end{equation}
and when $\epsilon\leq 0$ and $\sigma\geq 0$ we have
$$\epsilon_1^T\sigma_1+\epsilon_2^T\sigma_2=\frac{1}{2}(\xi-\varsigma)^2\geq 0,$$
then it yields that
\begin{equation}\label{complementarity}
\epsilon_1^T\sigma_1=0~and~\epsilon_2^T\sigma_2=0.
\end{equation}
Apparently it holds from $\epsilon_2^T\sigma_2=\sigma_2^T(v\circ v-v)=0$ in (\ref{complementarity}) that $v\circ v-v=0~for~\forall~\sigma_2>0$.

By replacing $V(\Lambda(x,v))$ in (\ref{pc}) by the Fenchel-Young equality
$V(\Lambda(x,v))=\Lambda(x,v)^Ty^*-V^\sharp(y^*)$, the
total complementary function $\vX(x,v,\varsigma,\sigma)$ corresponding to $(\mathcal{P}_c)$ can be defined in the following:
\begin{eqnarray}\label{duiouhanshu}
\vX(x,v,\varsigma,\sigma)&=&\Lambda(x,v)^Ty^*-V^\sharp(y^*)-U(x,v)\nonumber\\
&=&\frac{1}{2}x^T G(\varsigma,\sigma_1)x-c^Tx+v^T Diag(\sigma_2)v-(f+\sigma_1+\sigma_2)^Tv\nonumber\\
&&-\frac{1}{2}\varsigma^2-\alpha\varsigma-\Psi^\sharp(\sigma),
\end{eqnarray}
where
\begin{eqnarray}\label{tjre001}
G(\varsigma,\sigma_1)=A+\varsigma B+2Diag(\sigma_1),
\end{eqnarray}
and $Diag(\sigma_1)$ denotes a diagonal matrix with diagonal elements ${(\sigma_1)}_i$,
$i = 1, \cdots, n$. Then we get the canonical dual function:
\begin{eqnarray}
\vP^d(\varsigma,\sigma)=\sta\lbc\vX(x,v,\varsigma,\sigma)~|~x\in\mbb{R}^n,v\in\mbb{R}^n\rbc=U^{\Lambda}(\varsigma,\sigma)-V^\sharp(\varsigma,\sigma),
\end{eqnarray}
where $U^{\Lambda}(\varsigma,\sigma)=\sta\lbc\Lambda(x,v)^Ty^*-U(x,v)~|~x\in\mbb{R}^n,v\in\mbb{R}^n\rbc$
is the $\Lambda$-conjugate transformation and the notation $\sta\{\cdot\}$ represents calculating stationary point with respect to $(x,v)$. Correspondingly, introducing a dual feasible space
\begin{equation}\label{inequality}
S_\sharp=\{(\varsigma,\sigma)\in \mathbb{R}^1\times \mathbb{R}^{2n}~|~\varsigma\geq -\alpha,\sigma_1\geq 0,\sigma_2>0, c\in\mathcal{C}_{ol}(G(\varsigma,\sigma_1))\},
\end{equation}
where $\mathcal{C}_{ol}(G)$ denotes the column space of $G$, we can construct the canonical dual function as follows:
\begin{eqnarray}
\vP^d(\varsigma,\sigma)&=&U^{\Lambda}(\varsigma,\sigma)-\frac{1}{2}\varsigma^2\nonumber\\
&=&-\frac{1}{2}c^T G^+(\varsigma,\sigma_1)c-\frac{1}{4}\sum_{i=1}^n\frac{1}{{(\sigma_2)}_i}\lpa f_i+{(\sigma_1)}_i+{(\sigma_2)}_i\rpa^2\nonumber\\
&&-\frac{1}{2}\varsigma^2-\alpha\varsigma,~~\forall (\varsigma,\sigma)\in S_\sharp,\label{2000}
\end{eqnarray}
where $G^+$ stands for the Moore-Penrose generalized inverse of $G$. Then it leads to the canonical dual problem for the primal problem $(\mcal{P}_b)$ as
\begin{equation}\label{p:dual}
(\mcal{P}^\sharp):~~~~\max\lbc\vP^d(\varsigma,\sigma)~|~(\varsigma,\sigma)\in S_\sharp\rbc.
\end{equation}

Next we present the complementary-dual principle and for simplicity we denote $t\oslash s=\{t_i/s_i\}^n$ for any given n-vectors $t=\{t_i\}^n$ and $s=\{s_i\}^n$.

\begin{theorem}\label{th:Complementary-Dual}
If $(\bar\varsigma,\bar\sigma)\in S_\sharp$ is a KKT point of $\vP^d(\varsigma,\sigma)$, then the vector $(\bar x,\bar v)$ is feasible to the primal problem ($\mathcal{P}_b$) and we have
\beq\label{eq:nogap}
P(\bar x, \bar v)=\vX(\bar x,\bar v,\bar \varsigma,\bar \sigma)=\vP^d(\bar \varsigma,\bar \sigma),
\eeq
where
\beq\label{eq:solvedx1}
\bar{x}=\bar{x}(\bar\varsigma,\bar\sigma_1)=G^+(\bar\varsigma,\bar\sigma_1)c,
\eeq
\beq\label{eq:solvedx2}
\bar{v}=\bar{v}(\bar\sigma)=\frac{1}{2}(f+\bar\sigma_1+\bar\sigma_2)\oslash \bar\sigma_2.
\eeq
\end{theorem}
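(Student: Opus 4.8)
The plan is to read everything off two ingredients: the stationarity in $(x,v)$ that \emph{defines} $\vP^d$, and the KKT conditions of the dual problem $(\mcal{P}^\sharp)$ at $(\bar\varsigma,\bar\sigma)$. First I would differentiate the total complementary function $\vX$ of \eqref{duiouhanshu} in the primal variables: $\nabla_x\vX = G(\bar\varsigma,\bar\sigma_1)x - c = 0$ and $\nabla_v\vX = 2\,Diag(\bar\sigma_2)v - (f+\bar\sigma_1+\bar\sigma_2) = 0$ give exactly \eqref{eq:solvedx1}--\eqref{eq:solvedx2}, the first solvable because $c\in\mathcal{C}_{ol}(G(\bar\varsigma,\bar\sigma_1))$ ensures $GG^+c = c$ and the second because $\bar\sigma_2 > 0$. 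Since $\vP^d(\bar\varsigma,\bar\sigma)$ is by construction the stationary value $\vX(\bar x,\bar v,\bar\varsigma,\bar\sigma)$, the second equality in \eqref{eq:nogap} is immediate.

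Next I would extract primal feasibility from the dual KKT system. By the envelope theorem the partial derivatives of $\vP^d$ at $(\bar x,\bar v)$ coincide with those of $\vX$, giving $\partial_{(\sigma_1)_i}\vP^d = \bar x_i^2 - \bar v_i = (\epsilon_1)_i$, $\partial_{(\sigma_2)_i}\vP^d = \bar v_i^2 - \bar v_i = (\epsilon_2)_i$, and $\partial_\varsigma\vP^d = \tfrac12\bar x^T B\bar x - \alpha - \bar\varsigma = \xi(\bar x) - \bar\varsigma$. Complementary slackness for $\sigma_1\ge 0$ forces $(\epsilon_1)_i\le 0$ with $(\bar\sigma_1)_i(\epsilon_1)_i = 0$; because $\bar\sigma_2 > 0$ is interior, stationarity in $\sigma_2$ forces $(\epsilon_2)_i = 0$, i.e. $\bar v_i^2 = \bar v_i$, so $\bar v\in\{0,1\}^n$; and $\bar x\circ\bar x\le\bar v$ together with this integrality yields $-\bar v\le\bar x\le\bar v$. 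Hence $(\bar x,\bar v)$ is feasible for $(\mathcal{P}_b)$. The $\varsigma$-condition, using $\varsigma\ge-\alpha$ and $\xi(\bar x)\ge-\alpha$ (since $B\succeq 0$), forces $\xi(\bar x) = \bar\varsigma$ in both the interior and the active-boundary cases.

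Finally I would establish the first equality in \eqref{eq:nogap}. Feasibility makes $\Psi(\epsilon) = 0$, so the canonical form agrees with $P$ at $(\bar x,\bar v)$ and $P - \vX = V(y)+V^\sharp(y^*) - y^Ty^*$ for $y = \Lambda(\bar x,\bar v)$, $y^* = (\bar\varsigma,\bar\sigma)$. This gap splits as $\tfrac12(\xi(\bar x)-\bar\varsigma)^2 - \epsilon_1^T\bar\sigma_1 - \epsilon_2^T\bar\sigma_2$; the square vanishes by $\xi(\bar x) = \bar\varsigma$ and the two inner products vanish by \eqref{complementarity} and the slackness above. Thus Fenchel--Young holds with equality, $P(\bar x,\bar v) = \vX(\bar x,\bar v,\bar\varsigma,\bar\sigma)$, and \eqref{eq:nogap} is complete.

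I expect the principal obstacle to be the rigorous differentiation through the Moore--Penrose inverse $G^+(\varsigma,\sigma_1)$ in the envelope step: the identity $\partial G^+ = -G^+(\partial G)G^+$ holds only where the rank of $G$ is locally constant, and one must also verify that $\tfrac12\bar x^T G\bar x - c^T\bar x$ is independent of the choice of solution of $Gx = c$ when $G$ is singular. I would control this by working on the stratum $c\in\mathcal{C}_{ol}(G)$ of $S_\sharp$ and using $G\bar x = c$ to reduce the minimized quadratic to $-\tfrac12 c^T G^+c$ before differentiating, which eliminates the nonuniqueness of $\bar x$ from the argument.
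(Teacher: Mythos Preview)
Your proposal is correct and follows essentially the same route as the paper: both derive $(\bar x,\bar v)$ from stationarity of $\vX$ in the primal variables, extract feasibility and the complementarity relations from the dual KKT system (using $\bar\sigma_2>0$ to force $\bar v\in\{0,1\}^n$ and $B\succeq 0$ to handle the $\varsigma$-constraint and obtain $\bar\varsigma=\xi(\bar x)$), and then close the gap $P=\vX$ --- the paper by direct algebraic expansion of $\vX$, you by the equivalent Fenchel--Young defect $\tfrac12(\xi-\bar\varsigma)^2-\epsilon_1^T\bar\sigma_1-\epsilon_2^T\bar\sigma_2$. Your concern about differentiating through $G^+$ is legitimate, but the paper elides it as well, simply writing the gradients of $\vP^d$ in terms of $\bar x=G^+c$ without comment on rank stability.
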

\begin{proof}
Introducing lagrange multiplier $\epsilon=(\epsilon_0,\epsilon_1,\epsilon_2)\in \mathbb{R}^{1}\times\mathbb{R}^{n}\times\mathbb{R}^{n}$ with the respective three inequalities in (\ref{inequality}), we have the lagrangian function $\Theta$ for problem ($\mathcal{P}^\sharp$):
\begin{equation}
\Theta(\varsigma,\sigma,\epsilon_0,\epsilon_1,\epsilon_2)=\vP^d(\varsigma,\sigma)-\epsilon_0(\varsigma+\alpha)
-\epsilon_1^T\sigma_1-\epsilon_2^T\sigma_2.
\end{equation}
It follows from the criticality conditions
$$\nabla_{\sigma_1}\Theta(\bar\varsigma,\bar\sigma,\epsilon_0,\epsilon_1,\epsilon_2)=0,~~~~\nabla_{\sigma_2}\Theta(\bar\varsigma,\bar\sigma,\epsilon_0,\epsilon_1,\epsilon_2)=0$$
that
\begin{equation}\label{2001}
\epsilon_1=\nabla_{\sigma_1}\vP^d(\bar\varsigma,\bar\sigma)=\bar{x}(\bar\varsigma,\bar\sigma_1)\circ \bar{x}(\bar\varsigma,\bar\sigma_1)-\bar{v}(\bar\sigma)
\end{equation}
\begin{equation}\label{2002}
\epsilon_2=\nabla_{\sigma_2}\vP^d(\bar\varsigma,\bar\sigma)=\bar{v}(\bar\sigma)\circ \bar{v}(\bar\sigma)-\bar{v}(\bar\sigma),
\end{equation}
where $\bar{x}(\bar\varsigma,\bar\sigma_1)=G^+(\bar\varsigma,\bar\sigma_1)c$ and $\bar{v}(\bar\sigma)=\frac{1}{2}(f+\bar\sigma_1+\bar\sigma_2)\oslash \bar\sigma_2$.
Then the corresponding KKT conditions include
\begin{equation}\label{2003}
\bar\sigma_1^T\epsilon_1=0~and~\bar\sigma_2^T\epsilon_2=0,
\end{equation}
where $\bar\sigma_1\geq 0$, $\bar\sigma_2>0$, $\epsilon_1\leq 0$ and $\epsilon_2\leq 0$.
By $\epsilon_1\leq 0$ and (\ref{2001}), we have $\bar x\circ \bar x\leq \bar v$. Clearly, together with $\bar\sigma_2>0$  and (\ref{2002}), $\bar\sigma_2^T(\bar v\circ \bar v-\bar v)=0$ in (\ref{2003}) implies that $\bar v\circ \bar v=\bar v$. So when $(\bar\varsigma,\bar\sigma)$ is a KKT point of the problem $\vP^d(\varsigma,\sigma)$, $(\bar x,\bar v)$ is feasible point of ($\mathcal{P}_b$).

Furthermore,
$\bar{v}=\frac{1}{2}(f+\bar \sigma_1+\bar \sigma_2)\oslash \bar \sigma_2$ implies $$\sum_{i=1}^n{(\bar \sigma_2)}_i\lpa\frac{1}{2}\frac{f_i+{(\bar \sigma_1)}_i+{(\bar \sigma_2)}_i}{{(\bar \sigma_2)}_i} \rpa^2=\bar \sigma_2^T(\bar v\circ \bar v)=\bar v^T Diag(\bar \sigma_2)\bar v$$ and $$2 \bar v^T Diag(\bar \sigma_2)\bar v=\bar v^T(f+\bar \sigma_1+\bar \sigma_2),$$
then with $\bar{x}=G^+(\bar \varsigma,\bar \sigma_1)c$ and $\Psi^\sharp(\bar\sigma)=0$, from (\ref{2000}) we get
\begin{eqnarray}\label{2005}
\vP^d(\bar \varsigma,\bar \sigma)
&=&\frac{1}{2}x^T G(\bar \varsigma,\bar \sigma_1)\bar x-c^T \bar x-\bar v^T Diag(\bar \sigma_2)\bar v-\frac{1}{2}\bar \varsigma^2-\alpha\bar \varsigma\nonumber\\
&=&\frac{1}{2}\bar x^T G(\bar \varsigma,\bar \sigma_1)\bar x-c^T \bar x+\bar v^T Diag(\bar \sigma_2)\bar v-\bar v^T(f+\bar\sigma_1+\bar\sigma_2)-\frac{1}{2}\bar \varsigma^2-\alpha\bar \varsigma\\
&=&\vX(\bar x,\bar v,\bar\varsigma,\bar\sigma),\nonumber
\end{eqnarray}
where $(\bar\varsigma,\bar\sigma)\in S_\sharp$. By (\ref{tjre001}), (\ref{2005}),
$\bar v^T Diag(\bar \sigma_2)\bar v=\bar \sigma_2^T(\bar v\circ \bar v)$ and $\bar x^T Diag(\bar \sigma_1)\bar x=\bar \sigma_1^T(\bar x\circ \bar x)$,
we have
\begin{eqnarray}\label{2006}
\vP^d(\bar \varsigma,\bar \sigma)
&=&\vX(\bar x,\bar v,\bar\varsigma,\bar\sigma)\nonumber\\
&=&\frac{1}{2}\bar x^T \Big(A+\bar \varsigma B+2Diag(\bar \sigma_1)\Big)\bar x-c^T \bar x+\bar \sigma_2^T(\bar v\circ \bar v)-\bar v^T(f+\bar \sigma_1+\bar \sigma_2)\nonumber\\
&&-\frac{1}{2}\bar \varsigma^2-\alpha\bar \varsigma\nonumber\\
&=&\frac{1}{2}\bar x^T A\bar x-c^T \bar x-f^T\bar v+\bar \sigma_1^T(\bar x\circ \bar x-\bar v)+\bar \sigma_2^T(\bar v\circ \bar v-\bar v)\nonumber\\
&&+\frac{1}{2}\bar \varsigma \bar x^T B\bar x-\frac{1}{2}\bar \varsigma^2-\alpha\bar \varsigma.
\end{eqnarray}
By substituting $\bar \sigma_1^T(\bar x\circ \bar x-\bar v)=0$ and $\bar\sigma_2^T(\bar v\circ \bar v-\bar v)=0$ into (\ref{2006}), we obtain
\begin{eqnarray}\label{2008}
\vP^d(\bar \varsigma,\bar \sigma)
=\vX(\bar x,\bar v,\bar\varsigma,\bar\sigma)=\frac{1}{2}\bar x^T A\bar x-c^T \bar x-f^T\bar v+\frac{1}{2}\bar \varsigma \bar x^T B\bar x-\frac{1}{2}\bar \varsigma^2-\alpha\bar \varsigma.
\end{eqnarray}

From $\nabla_{\varsigma}\Theta(\bar\varsigma,\bar\sigma,\epsilon_0,\epsilon_1,\epsilon_2)=0$,
together with $\bar{x}=G^+(\bar\varsigma,\bar\sigma_1)c$ it follows that
\begin{equation}\label{tianjia2001}
\frac{1}{2}\bar x^T B \bar x-\bar \varsigma -\alpha-\epsilon_0=0,
\end{equation}
and the accompanying KKT conditions include
\begin{equation}
\bar\varsigma +\alpha\geq 0,~\epsilon_0\leq 0,~\epsilon_0(\bar\varsigma +\alpha)=0.
\end{equation}

Suppose $\epsilon_0< 0$, it holds from $\epsilon_0(\bar\varsigma +\alpha)=0$ that $\bar\varsigma +\alpha=0$, then due to (\ref{tianjia2001}) we have
$$\frac{1}{2}\bar x^T B \bar x-\epsilon_0=0,$$
on the other hand, since $B$ is a symmetric positive semi-definite matrix and $\epsilon_0< 0$, we acquire
$$\frac{1}{2}\bar x^T B \bar x-\epsilon_0>0,$$
which is a contradiction, then $\epsilon_0=0$. Thereby from (\ref{tianjia2001}), we have
\begin{equation}
\frac{1}{2}\bar x^T B \bar x-\bar \varsigma -\alpha=0,
\end{equation}
which implies that $\alpha=\frac{1}{2}\bar x^T B \bar x-\bar \varsigma$
and $\bar \varsigma=\frac{1}{2}\bar x^T B \bar x-\alpha$, then the following equality holds:
$$\frac{1}{2}\bar \varsigma^2+\alpha\bar \varsigma=\frac{1}{2}\bar \varsigma^2+\Big(\frac{1}{2}\bar x^T B \bar x-\bar \varsigma\Big)\bar \varsigma=\frac{1}{2}\bar \varsigma \bar x^T B\bar x-\frac{1}{2}\bar \varsigma^2=\frac{1}{2}\bar \varsigma \bar x^T B\bar x-\frac{1}{2}\lpa\frac{1}{2}\bar x^T B \bar x-\alpha \rpa^2.$$
Thus it is apparent from (\ref{2008}) that
\begin{eqnarray}
\vP^d(\bar \varsigma,\bar \sigma)
=\vX(\bar x,\bar v,\bar\varsigma,\bar\sigma)=\frac{1}{2}\bar x^T A\bar x-c^T \bar x+\frac{1}{2}\lpa\frac{1}{2}\bar x^T B \bar x-\alpha \rpa^2-f^T\bar v
=P(\bar x, \bar v).
\end{eqnarray}
The proof is completed.
\hfill\qed
\end{proof}

\section{Global Optimality Criteria}
The global optimality conditions for
problem $(\mathcal{P}_b)$ are developed in this section. Firstly, we introduce the following useful feasible space:
\begin{equation}\label{3001}
S^+_\sharp=\{(\varsigma,\sigma)\in \mathbb{R}^1\times \mathbb{R}^{2n}~|~\varsigma\geq -\alpha,\sigma_1\geq 0,\sigma_2>0, G(\varsigma,\sigma_1)\succ 0\},
\end{equation}
where $G(\varsigma,\sigma_1)\succ 0$ means that $G(\varsigma,\sigma_1)$ is a positive definite matrix. It is easy to  prove that $S^+_\sharp$ is a convex set. In the following, we use $G$ for short to denote $G(\varsigma,\sigma_1)$.

For convenience, we give the first and second derivatives of function $\vP^d(\varsigma,\sigma)$:
\begin{eqnarray}
\nabla \vP^d(\varsigma,\sigma)
&=&\left(\begin{array}{l}
\frac{1}{2}c^T G^{-1} \frac{\partial G}{\partial\varsigma} G^{-1}c-\varsigma-\alpha\\
\\
\lbc\frac{1}{2}c^T G^{-1} \frac{\partial G}{\partial(\sigma_1)_i} G^{-1}c-\frac{1}{2}\frac{f_i+(\sigma_1)_i+(\sigma_2)_i}{(\sigma_2)_i}\rbc_{i=1}^n\\
\\
\lbc-\frac{1}{2}\frac{f_j+(\sigma_1)_j+(\sigma_2)_j}{(\sigma_2)_j}
+\frac{1}{4}\lpa\frac{f_i+(\sigma_1)_i+(\sigma_2)_i}{(\sigma_2)_i}\rpa^2\rbc_{j=1}^n
\end{array}\right),\label{3004}\\
\nabla^2\vP^d(\varsigma,\sigma)&=&-J_1-J_2-J_3,\label{3005}
\end{eqnarray}
in which $J_1$, $J_2$ and $J_3\in\mbb{R}^{(2n+1)\times (2n+1)}$ are defined as

$$
 J_1=\begin{bmatrix} 1 &  0&  \cdots&  0\\0 &  0& \cdots&  0\\ \vdots&  \vdots& \ddots &  \vdots\\ 0 &  0&  \cdots&  0 \end{bmatrix}, ~~
 J_2=\begin{bmatrix} Z^TG^{-1}Z & \bm 0_{(n+1)\times n}\\\bm 0_{n\times(n+1)} &  \bm 0_{n\times n}  \end{bmatrix}, \wand
J_3=\begin{bmatrix} 0 & \bm 0_{1\times n}& \bm 0_{1\times n}\\\bm 0_{n\times 1}&H_{2\sigma_1^2} & H_{\sigma_1\sigma_2} \\\bm 0_{n\times 1}&H_{\sigma_2\sigma_1} & H_{\sigma_2^2}\end{bmatrix},
$$
where
\begin{eqnarray*}
&& Z=
\begin{bmatrix}
 \frac{\partial G}{\partial\varsigma}G^{-1}c, \frac{\partial G}{\partial(\sigma_1)_1}G^{-1}c, \frac{\partial G}{\partial(\sigma_1)_2}G^{-1}c,\ldots, \frac{\partial G}{\partial(\sigma_1)_n}G^{-1}c
\end{bmatrix},\\
&&H_{2\sigma_1^2}= Diag\lbc-\frac{1}{2(\sigma_2)_i}\rbc,\\
&& H_{\sigma_1\sigma_2} =H_{\sigma_2\sigma_1} = Diag\lbc\frac{(\sigma_1)_i+f_i}{2(\sigma_2)^2_i}\rbc ,
\\
&& H_{\sigma_2^2}= Diag\lbc-\frac{((\sigma_1)_i+f_i)^2}{2(\sigma_2)^3_i}\rbc.
\end{eqnarray*}

\begin{lemma}\label{lemma2}
The canonical dual function $\vP^d(\varsigma,\sigma)$ is concave on $S^+_\sharp$.
\end{lemma}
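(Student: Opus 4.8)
The set $S^+_\sharp$ has already been observed to be convex, so the plan is to prove concavity of $\vP^d$ by showing that the Hessian satisfies $\nabla^2\vP^d(\varsigma,\sigma)\preceq 0$ at every point of $S^+_\sharp$; concavity then follows from the standard second-order criterion on a convex domain. Reading off (\ref{3005}), I would split $\vP^d$ into its three additive pieces, namely $-\frac12 c^TG^{-1}c$, the fixed-cost sum $-\frac14\sum_i(\sigma_2)_i^{-1}(f_i+(\sigma_1)_i+(\sigma_2)_i)^2$, and $-\frac12\varsigma^2-\alpha\varsigma$, and argue that each contributes a negative semidefinite block to $\nabla^2\vP^d$ (embedded in the full $(2n{+}1)$-dimensional space); these are exactly the three terms collected as $J_1,J_2,J_3$ in (\ref{3005}). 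Since a sum of negative semidefinite matrices is negative semidefinite, this yields the claim.

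Two of the three pieces are quick. The term $-\frac12\varsigma^2-\alpha\varsigma$ contributes only the single entry $-1$ in the $\varsigma$-slot (the $J_1$ piece) and is trivially negative semidefinite. For $-\frac12 c^TG^{-1}c$, the essential input is the restriction $G\succ0$ built into the definition of $S^+_\sharp$: differentiating with $\frac{\partial}{\partial\theta}G^{-1}=-G^{-1}\frac{\partial G}{\partial\theta}G^{-1}$ and using that $G$ is affine in $(\varsigma,\sigma_1)$ (so the second derivatives of $G$ vanish), the $(\varsigma,\sigma_1)$-Hessian of this term equals $-Z^TG^{-1}Z$. Because $G^{-1}\succ0$, the Gram-type matrix $Z^TG^{-1}Z$ is positive semidefinite by congruence, so this contribution is negative semidefinite. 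Conceptually this is just the operator-convexity of $G\mapsto G^{-1}$ composed with an affine map, and it is precisely where the strengthening from $c\in\mathcal{C}_{ol}(G)$ (in $S_\sharp$) to $G\succ0$ (in $S^+_\sharp$) is needed.

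The heart of the argument is the fixed-cost term, giving the $J_3$ block. Since the $i$-th summand depends only on the single pair $((\sigma_1)_i,(\sigma_2)_i)$, its Hessian is block diagonal once the indices are paired, with $2\times2$ blocks assembled from the diagonal entries of $H_{2\sigma_1^2}$, $H_{\sigma_1\sigma_2}$ and $H_{\sigma_2^2}$. I would then check semidefiniteness block by block: the trace has the appropriate single sign, while a short computation shows that the determinant vanishes identically, the square of the off-diagonal entry cancelling exactly against the product of the two diagonal entries. Hence each block is rank-one semidefinite. This vanishing determinant is the main obstacle and the real content of the lemma: it is the statement that $(g,s)\mapsto g^2/s$ is the perspective (quadratic-over-linear) function, jointly convex for $s>0$, so that with $g=f_i+(\sigma_1)_i+(\sigma_2)_i$ and $s=(\sigma_2)_i$ both affine, each summand is jointly convex in $((\sigma_1)_i,(\sigma_2)_i)$ and its negative is concave. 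Assembling the three negative semidefinite contributions gives $\nabla^2\vP^d\preceq0$ throughout the convex set $S^+_\sharp$, which proves that $\vP^d$ is concave there.
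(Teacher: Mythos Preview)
Your proposal is correct and follows essentially the same route as the paper: both use the decomposition $\nabla^2\vP^d=-J_1-J_2-J_3$ from (\ref{3005}) and verify that each piece contributes a negative semidefinite quadratic form on $S^+_\sharp$. The only cosmetic difference is in the treatment of the $J_3$ block: the paper completes the square explicitly, writing the contribution as $\sum_i -\tfrac{1}{2(\sigma_2)_i}\big(s_i-t_i\tfrac{(\sigma_1)_i+f_i}{(\sigma_2)_i}\big)^2$, whereas you verify the $2\times2$ blocks are rank-one via the vanishing determinant (equivalently, the perspective-function convexity); these are the same fact in different clothing.
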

\begin{proof}
For any given non-zero vector $W=
\left(\begin{array}{l}
r\\
s\\
t
\end{array}\right)\in \mbb{R}^{2n+1}$, where $r\in \mbb{R}^{1}$, $s\in \mbb{R}^{n}$, $t\in \mbb{R}^{n}$,
let $Z_0=\left(\begin{array}{l}
r\\
s
\end{array}\right)$, by (\ref{3005}), with $G\succ 0$ and $\sigma_2>0$ we have
\begin{eqnarray}\label{3006}
W^T\nabla^2\vP^d(\varsigma,\sigma)W&=&-W^TJ_1W-W^TJ_2W-W^TJ_3W\nonumber\\
&=&-r^2-(ZZ_0)^TG^{-1}(ZZ_0)+\sum_{i=1}^n-\frac{1}{2(\sigma_2)_i}\lpa s_i-t_i\frac{(\sigma_1)_i+f_i}{(\sigma_2)_i} \rpa^2\nonumber\\
&\leq&0,
\end{eqnarray}
so the canonical dual function $\vP^d(\varsigma,\sigma)$ is concave on $S^+_\sharp$.
\hfill\qed
\end{proof}

\begin{theorem}\label{theorem4}
Suppose that the vector $\bar{y}^*=(\bar \varsigma,\bar \sigma)=(\bar \varsigma,\bar \sigma_1,\bar \sigma_2)\in S^+_\sharp$ is a critical point
of the dual function $\vP^d(\varsigma,\sigma)$, then $\bar{y}^*$ is a global maximizer of $\vP^d(\varsigma,\sigma)$ on $S^+_\sharp$. Let $(\bar{x}, \bar v)=\lpa G^{-1}(\bar\varsigma,\bar\sigma_1)c,\frac{1}{2}(f+\bar\sigma_1+\bar\sigma_2)\oslash \bar\sigma_2)\rpa$, the $(\bar{x}, \bar v)$ is a global minimum of $P(x,v)$ on $\x_v$(i.e., the $(\bar{x}, \bar v)$ is a global solution of $(\mathcal{P}_b)$), and
\begin{equation}\label{3007}
P(\bar{x}, \bar v)   =\min_{(x,v)\in\x_v} P(x,v)=\max_{(\varsigma,\sigma)\in S^+_\sharp} \vP^d(\varsigma,\sigma)=\vP^d(\bar \varsigma,\bar \sigma).
\end{equation}
\end{theorem}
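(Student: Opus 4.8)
The plan is to prove the three assertions in sequence, relying on the concavity established in Lemma \ref{lemma2} and on the complementary-dual principle of Theorem \ref{th:Complementary-Dual}. For the first assertion I would argue by convex analysis alone: the set $S^+_\sharp$ is convex and $\vP^d$ is concave on it, so a critical point $\bar y^*=(\bar\varsigma,\bar\sigma)$ with $\nabla\vP^d(\bar\varsigma,\bar\sigma)=0$ obeys, for every $(\varsigma,\sigma)\in S^+_\sharp$, the first-order concavity inequality
$$\vP^d(\varsigma,\sigma)\le \vP^d(\bar\varsigma,\bar\sigma)+\nabla\vP^d(\bar\varsigma,\bar\sigma)^T\lpa(\varsigma,\sigma)-(\bar\varsigma,\bar\sigma)\rpa=\vP^d(\bar\varsigma,\bar\sigma),$$
whence $\bar y^*$ is a global maximizer on $S^+_\sharp$. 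Since $\bar y^*\in S^+_\sharp$ and $\nabla\vP^d(\bar y^*)=0$, it is in particular a KKT point, so Theorem \ref{th:Complementary-Dual} applies; moreover $G(\bar\varsigma,\bar\sigma_1)\succ0$ on $S^+_\sharp$ forces $G^+=G^{-1}$, so the formula for $(\bar x,\bar v)$ here coincides with the one there, and $(\bar x,\bar v)$ is feasible with $P(\bar x,\bar v)=\vP^d(\bar\varsigma,\bar\sigma)$.

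Next I would exploit the saddle structure of the total complementary function $\vX$. Freezing the dual variables at $(\bar\varsigma,\bar\sigma)\in S^+_\sharp$, the map $(x,v)\mapsto\vX(x,v,\bar\varsigma,\bar\sigma)$ has block-diagonal Hessian $\diag\lpa G(\bar\varsigma,\bar\sigma_1),\,2\,Diag(\bar\sigma_2)\rpa$, which is positive definite because $G\succ0$ and $\bar\sigma_2>0$. Hence this map is strictly convex and attains its unique minimum at its stationary point, which is precisely the $(\bar x,\bar v)$ recorded in the statement; this identifies the stationary value defining $\vP^d$ with a genuine minimum,
$$\vP^d(\bar\varsigma,\bar\sigma)=\min_{x,v}\vX(x,v,\bar\varsigma,\bar\sigma)=\vX(\bar x,\bar v,\bar\varsigma,\bar\sigma).$$

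The crux is a pointwise weak-duality bound $\vX(x,v,\bar\varsigma,\bar\sigma)\le P(x,v)$ valid on the whole feasible set $\x_v$. I would first regroup
$$\vX(x,v,\bar\varsigma,\bar\sigma)=\lbk Q(x)-f^Tv\rbk+\lbk\tfrac{1}{2}\bar\varsigma\,x^TBx-\tfrac{1}{2}\bar\varsigma^2-\alpha\bar\varsigma\rbk+\bar\sigma_1^T(x\circ x-v)+\bar\sigma_2^T(v\circ v-v),$$
and then bound each piece on $\x_v$: for $(x,v)\in\x_v$ feasibility $x\circ x\le v$ together with $\bar\sigma_1\ge0$ makes the third term $\le0$, while $v\in\{0,1\}^n$ gives $v\circ v-v=0$ and annihilates the fourth term; the middle bracket is controlled by the Fenchel--Young inequality $\tfrac{1}{2}\bar\varsigma\,x^TBx-\tfrac{1}{2}\bar\varsigma^2-\alpha\bar\varsigma\le\tfrac{1}{2}\lpa\tfrac{1}{2}x^TBx-\alpha\rpa^2=W(x)$. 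Summing the estimates yields $\vX(x,v,\bar\varsigma,\bar\sigma)\le Q(x)+W(x)-f^Tv=P(x,v)$. I expect this step to be the main obstacle, since it is where the sign conditions $\bar\sigma_1\ge0$ and $\bar\sigma_2>0$, the complementarity encoded in $\x_v$, and the concavity of the $\varsigma$-quadratic must all be combined correctly.

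Finally I would assemble the chain. For every $(x,v)\in\x_v$,
$$P(x,v)\ge\vX(x,v,\bar\varsigma,\bar\sigma)\ge\min_{x',v'}\vX(x',v',\bar\varsigma,\bar\sigma)=\vP^d(\bar\varsigma,\bar\sigma)=P(\bar x,\bar v),$$
where the last equality and the feasibility of $(\bar x,\bar v)$ come from Theorem \ref{th:Complementary-Dual}. Thus $(\bar x,\bar v)$ minimizes $P$ over $\x_v$, and combining this with the first assertion $\vP^d(\bar\varsigma,\bar\sigma)=\max_{S^+_\sharp}\vP^d$ delivers the stated equalities (\ref{3007}) with no duality gap.
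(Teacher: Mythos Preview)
Your proof is correct and reaches the same conclusion, but the route you take to the global-minimality of $(\bar x,\bar v)$ is genuinely different from the paper's. The paper argues via a full saddle-point min-max interchange: it observes that $\vX$ is convex in $(x,v)$ and concave in $(\varsigma,\sigma)$ on $\bbR^{2n}\times S^+_\sharp$, invokes the equality $\max_{S^+_\sharp}\min_{(x,v)}\vX=\min_{(x,v)}\max_{S^+_\sharp}\vX$, and then evaluates the inner maximum explicitly, showing it equals $P(x,v)$ on the \emph{relaxed} feasible set $\bar\x_v$ and $+\infty$ outside; finally it uses $\x_v\subset\bar\x_v$ to descend to the original problem. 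You instead keep the dual variable frozen at the single point $(\bar\varsigma,\bar\sigma)$ and prove the direct weak-duality bound $\vX(x,v,\bar\varsigma,\bar\sigma)\le P(x,v)$ on $\x_v$ by a termwise estimate (sign conditions on $\bar\sigma_1,\bar\sigma_2$, the identity $v\circ v=v$, and the scalar Fenchel--Young inequality for the $\varsigma$-part). Your argument is more elementary---it avoids the min-max theorem and the auxiliary relaxed set $\bar\x_v$---and is self-contained once Theorem~\ref{th:Complementary-Dual} supplies feasibility and the equality $P(\bar x,\bar v)=\vP^d(\bar\varsigma,\bar\sigma)$. The paper's approach, on the other hand, makes the saddle structure of $\vX$ explicit and yields slightly more, namely that $P(\bar x,\bar v)$ actually equals the minimum of $P$ over the larger relaxed region $\bar\x_v$.
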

\begin{proof}
From Lemma \ref{lemma2} the dual function $\vP^d(\varsigma,\sigma)$ is concave on $S^+_\sharp$, so
$\bar{y}^*=(\bar \varsigma,\bar \sigma)$ is a global maximizer of $\vP^d(\varsigma,\sigma)$ on $S^+_\sharp$, i.e.,
\begin{equation}\label{3009}
\vP^d(\bar \varsigma,\bar \sigma)=\max_{(\varsigma,\sigma)\in S^+_\sharp} \vP^d(\varsigma,\sigma),
\end{equation}
and this critical point
of $\vP^d(\varsigma,\sigma)$ is a KKT point of $\vP^d(\varsigma,\sigma)$. Then by Theorem \ref{th:Complementary-Dual},
the vector $(\bar{x}, \bar v)$ defined
by (\ref{eq:solvedx1})(now $G^+(\bar\varsigma,\bar\sigma_1)=G^{-1}(\bar\varsigma,\bar\sigma_1)$) and (\ref{eq:solvedx2}) is a feasible solution to problem $(\mathcal{P}_b)$ and
\begin{equation}\label{3008}
P(\bar x, \bar v)=\vX(\bar x,\bar v,\bar \varsigma,\bar \sigma)=\vP^d(\bar \varsigma,\bar \sigma).
\end{equation}

As $G\succ 0$ and $\sigma_2>0$, we have $\nabla^2_{(x,v)}\vX(x,v,\varsigma,\sigma)\succ 0$
and
$\nabla^2_{(\varsigma,\sigma)}\vX(x,v,\varsigma,\sigma)\preceq 0$. So $\vX(x,v,\varsigma,\sigma)$ is convex in $(x, v)\in \mbb{R}^{2n}=\mbb{R}^{n}\times \mbb{R}^{n}$ and concave in $y^*=(\varsigma,\sigma)\in S^+_\sharp$, then
\begin{equation}\label{3010}
\vP^d(\varsigma,\sigma)=\min_{(x, v)\in \mbb{R}^{2n}} \vX(x,v,\varsigma,\sigma),
\end{equation}
and $(\bar x,\bar v,\bar{y}^*)=(\bar x,\bar v,\bar \varsigma,\bar \sigma)$ is a saddle point of the total complementary function $\vX(\bar x,\bar v,\bar \varsigma,\bar \sigma)$ on $\mbb{R}^{2n}\times S^+_\sharp$, thereby the saddle min-max duality theory holds
\begin{equation}\label{3011}
\max_{(\varsigma,\sigma)\in S^+_\sharp} \min_{(x, v)\in \mbb{R}^{2n}}\vX(x,v,\varsigma,\sigma)= \min_{(x, v)\in \mbb{R}^{2n}}\max_{(\varsigma,\sigma)\in S^+_\sharp}\vX(x,v,\varsigma,\sigma).
\end{equation}
Combining (\ref{3008}), (\ref{3009}), (\ref{3010}) and (\ref{3011}), we get
\begin{equation}\label{3012}
P(\bar x, \bar v)= \min_{(x, v)\in \mbb{R}^{2n}}\max_{(\varsigma,\sigma)\in S^+_\sharp}\vX(x,v,\varsigma,\sigma),
\end{equation}
where $(\bar{x}, \bar v)\in \x_v$.

By (\ref{operator}),
$\Lambda(x,v)=\bpmat \xi\\ \epsilon \epmat=\bpmat \xi\\ \epsilon_1 \\ \epsilon_2 \epmat =\bpmat
\frac{1}{2}x^T Bx-\alpha\\
x\circ x-v\\
v\circ v-v\\
\epmat$.
When $(\varsigma,\sigma)\in S^+_\sharp$, together with (\ref{tianjian2002a}) and (\ref{tianjian2002}), $V^\sharp(y^*)=\frac{1}{2}\varsigma^2$. From (\ref{duiouhanshu}) and $U(x,v)=-Q(x)+f^Tv$, when $(\varsigma,\sigma)\in S^+_\sharp$ we obtain
\begin{eqnarray}\label{3013}
\vX(x,v,\varsigma,\sigma)&=&\Lambda(x,v)^Ty^*-V^\sharp(y^*)-U(x,v)\nonumber\\
&=&\bpmat \xi\\ \epsilon_1 \\ \epsilon_2 \epmat^T\bpmat \varsigma\\ \sigma_1 \\ \sigma_2 \epmat-\frac{1}{2}\varsigma^2-(-Q(x)+f^Tv)\nonumber\\
&=&Q(x)-f^Tv-\frac{1}{2}\varsigma^2+\xi\varsigma+\epsilon_1^T\sigma_1+\epsilon_2^T\sigma_2,
\end{eqnarray}
(a) Consider $\max_{(\varsigma,\sigma)\in S^+_\sharp}(-\frac{1}{2}\varsigma^2+\xi\varsigma)$.
\begin{equation}\label{3014aa}
\max_{(\varsigma,\sigma)\in S^+_\sharp}(-\frac{1}{2}\varsigma^2+\xi\varsigma)=\frac{1}{2}\xi^2= \frac{1}{2}\lpa\frac{1}{2}x^T B x-\alpha \rpa^2=W(x).
\end{equation}
(b) Consider $\max_{(\varsigma,\sigma)\in S^+_\sharp}(\epsilon_1^T\sigma_1+\epsilon_2^T\sigma_2)$.

Noting $\bpmat \epsilon_1 \\ \epsilon_2 \epmat =\bpmat
x\circ x-v\\
v\circ v-v\\
\epmat$, by (\ref{constrains}) we define the relaxed quadratic form region
\begin{equation}\label{3014}
\bar\x_v=\{(x,v)\in \mathbb{R}^n\times \mathbb{R}^n~|~x\circ x\leq v,~v\circ(v-e)\leq 0 \},
\end{equation}
then we have $\x_v\subset\bar\x_v$ and
\begin{equation}\label{3015}
\bpmat \epsilon_1 \\ \epsilon_2 \epmat \leq0 \Longleftrightarrow (x,v)\in \bar\x_v.
\end{equation}
It follows from $(\varsigma,\sigma)\in S^+_\sharp$ that $\sigma_1\geq 0$ and $\sigma_2>0$, thus with (\ref{3015}) we have
\begin{equation}\label{3016}
   \max_{(\varsigma,\sigma)\in S^+_\sharp}(\epsilon_1^T\sigma_1+\epsilon_2^T\sigma_2)=
    \left\{\begin{array}{ll}
    0 &~~ if~~(x,v)\in \bar\x_v,\\
    +\infty &~~\wotherwise(~i.e.,~if~(x,v)\notin \bar\x_v).
    \end{array}\right.
    \end{equation}

Taking (\ref{3013}),  (\ref{3014aa}) and  (\ref{3016}) into consideration,
we find
\begin{eqnarray}\label{3017}
\max_{(\varsigma,\sigma)\in S^+_\sharp}\vX(x,v,\varsigma,\sigma)&=&Q(x)-f^Tv+\max_{(\varsigma,\sigma)\in S^+_\sharp}(-\frac{1}{2}\varsigma^2+\xi\varsigma)+\max_{(\varsigma,\sigma)\in S^+_\sharp}(\epsilon_1^T\sigma_1+\epsilon_2^T\sigma_2)\nonumber\\
&=&
    \left\{\begin{array}{ll}
    Q(x)+W(x)-f^Tv=P(x,v)  &~~ if~~(x,v)\in \bar\x_v,\\
    +\infty &~~\wotherwise.
    \end{array}\right.
\end{eqnarray}
Then it holds form(\ref{3012}) and (\ref{3017}) that
\begin{equation}\label{3018}
P(\bar x, \bar v)= \min_{(x, v)\in \mbb{R}^{2n}}\max_{(\varsigma,\sigma)\in S^+_\sharp}\vX(x,v,\varsigma,\sigma)=\min_{(x, v)\in \bar\x_v}P(x,v).
\end{equation}
Because $\x_v\subset\bar\x_v$, it yields
$$\min_{(x, v)\in \bar\x_v}P(x,v)\leq \min_{(x, v)\in \x_v}P(x,v),$$
which with (\ref{3018}) lead to
$$P(\bar x, \bar v)\leq \min_{(x, v)\in \x_v}P(x,v),$$
which with $(\bar{x}, \bar v)\in \x_v$ implies
\begin{equation}\label{3019}
P(\bar{x}, \bar v)   =\min_{(x,v)\in\x_v} P(x,v).
\end{equation}
Due to the fact that (\ref{3019}), (\ref{3008}) and (\ref{3009}), we get
$$P(\bar{x}, \bar v)   =\min_{(x,v)\in\x_v} P(x,v)=\max_{(\varsigma,\sigma)\in S^+_\sharp} \vP^d(\varsigma,\sigma)=\vP^d(\bar \varsigma,\bar \sigma).$$
\hfill\qed
\end{proof}

Theorem \ref{theorem4} shows that our fourth-order polynomial mixed-integer minimization
problem $(\mathcal{P}_b)$ is canonically dual to the concave maximization
problem as follows:
\begin{equation}\label{3022}
(\mcal{P}^\sharp_+):~~~~\max\lbc\vP^d(\varsigma,\sigma)~|~(\varsigma,\sigma)\in S^+_\sharp\rbc.
\end{equation}
Noted that  $\vP^d(\varsigma,\sigma)$ is a continuous concave function over a convex feasible space
$S^+_\sharp$, if $(\bar\varsigma,\bar\sigma)\in S^+_\sharp$ is a critical point of $\vP^d(\varsigma,\sigma)$, then it must be a global maximum point of problem $(\mcal{P}^\sharp_+)$, and $(\bar{x}, \bar v)=\lpa G^{-1}(\bar\varsigma,\bar\sigma_1)c,\frac{1}{2}(f+\bar\sigma_1+\bar\sigma_2)\oslash \bar\sigma_2)\rpa$ should be
a global minimum point of problem $(\mathcal{P}_b)$.

Using ${(\sigma_2)}_i=|f_i+{(\sigma_1)}_i|$ is the solution of $\min_{{(\sigma_2)}_i>0}\frac{1}{{(\sigma_2)}_i}\lpa f_i+{(\sigma_1)}_i+{(\sigma_2)}_i\rpa^2$,
we have
\begin{equation}\label{3028}
\max_{{(\sigma_2)}_i>0}-\frac{1}{4}\frac{1}{{(\sigma_2)}_i}\lpa f_i+{(\sigma_1)}_i+{(\sigma_2)}_i\rpa^2=-(f_i+{(\sigma_1)}_i)^+.
\end{equation}
So for a fixed $(\varsigma,\sigma_1)$, let
\begin{equation}\label{3025}
\vP^g(\varsigma,\sigma_1)=\max_{\sigma_2>0}\vP^d(\varsigma,\sigma)=-\frac{1}{2}c^T G^{-1}c-\sum_{i=1}^n\lpa f_i+{(\sigma_1)}_i\rpa^+
-\frac{1}{2}\varsigma^2-\alpha\varsigma,~~(\varsigma,\sigma_1)\in S^+_{\varsigma\sigma_1},
\end{equation}
where
\begin{equation}\label{3026}
S^+_{\varsigma\sigma_1}=\{(\varsigma,\sigma_1)\in \mathbb{R}^1\times \mathbb{R}^{n}~|~\varsigma\geq -\alpha,\sigma_1\geq 0, G(\varsigma,\sigma_1)\succ 0, f_i+{(\sigma_1)}_i\neq 0, \forall i=1,\cdots,n\}.
\end{equation}
Then we can write the canonical dual problem $(\mcal{P}^\sharp_+)$ to a simple
form:
\begin{equation}\label{3027}
(\mcal{P}^g_+):~~~~\max\lbc\vP^g(\varsigma,\sigma_1)~|~(\varsigma,\sigma_1)\in S^+_{\varsigma\sigma_1}\rbc.
\end{equation}

Moreover, set $\delta(t)^+=\{\delta_i(t_i)^+\}^n\in \mathbb{R}^n$, where
\begin{equation}\label{3030}
     \delta_i(t_i)^+=
    \left\{\begin{array}{ll}
    1 &~~ if~~t_i>0,\\
    0 &~~if~~t_i<0,
    \end{array}\right. ~~i=1,\cdots,n.
    \end{equation}
By Theorem 3, it can be easily to get next theorem about analytic solution to primal problem $(\mathcal{P}_b)$.
\begin{theorem}\label{theorem7}Given
$ A\in\mathbb{R}^{n\times n}$, $ B\succeq 0\in\mathbb{R}^{n\times n}$, $c,f\in \mathbb{R}^n$,
if $(\bar\varsigma,\bar\sigma_1)\in  S^+_{\varsigma\sigma_1}$ is a critical point of $\vP^g(\varsigma,\sigma_1)$, then the vector
\begin{equation}\label{3031}
(\bar{x}, \bar v)=\lpa G^{-1}(\bar\varsigma,\bar\sigma_1)c,\delta(f+\bar\sigma_1)^+\rpa
\end{equation}
is a global minimum point of $(\mathcal{P}_b)$.
\end{theorem}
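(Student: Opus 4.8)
The plan is to reduce Theorem \ref{theorem7} to Theorem \ref{theorem4} by ``lifting'' the given critical point of the marginal function $\vP^g$ to a critical point of the full dual $\vP^d$ in $S^+_\sharp$, and then simplifying the resulting $\bar v$. The bridge is the partial-maximization identity $\vP^g(\varsigma,\sigma_1)=\max_{\sigma_2>0}\vP^d(\varsigma,\sigma)$ in (\ref{3025}), together with the explicit maximizer $(\sigma_2)_i=|f_i+(\sigma_1)_i|$ used to derive (\ref{3028}).

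First I would set, for the given critical point $(\bar\varsigma,\bar\sigma_1)\in S^+_{\varsigma\sigma_1}$, the companion vector $\bar\sigma_2$ by $(\bar\sigma_2)_i=|f_i+(\bar\sigma_1)_i|$, and check that $(\bar\varsigma,\bar\sigma_1,\bar\sigma_2)\in S^+_\sharp$: the requirements $\bar\varsigma\geq-\alpha$, $\bar\sigma_1\geq 0$, $G(\bar\varsigma,\bar\sigma_1)\succ 0$ are inherited from $S^+_{\varsigma\sigma_1}$, while $\bar\sigma_2>0$ holds componentwise precisely because $f_i+(\bar\sigma_1)_i\neq 0$ is imposed in (\ref{3026}). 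For each fixed $i$ the one-variable map $s\mapsto-\tfrac14 s^{-1}(f_i+(\bar\sigma_1)_i+s)^2$ is strictly concave on $(0,\infty)$, since its second derivative equals $-\tfrac12(f_i+(\bar\sigma_1)_i)^2 s^{-3}<0$ when $f_i+(\bar\sigma_1)_i\neq 0$; hence $(\bar\sigma_2)_i$ is the unique and interior maximizer, and in particular $\nabla_{\sigma_2}\vP^d(\bar\varsigma,\bar\sigma_1,\bar\sigma_2)=0$.

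Next I would invoke an envelope argument. Writing the maximizer as the map $\sigma_2^*(\varsigma,\sigma_1)$ with $\sigma_2^*(\varsigma,\sigma_1)_i=|f_i+(\sigma_1)_i|$, which is smooth on $S^+_{\varsigma\sigma_1}$ because $f+\sigma_1$ stays off its zero set there, I differentiate the identity $\vP^g(\varsigma,\sigma_1)=\vP^d(\varsigma,\sigma_1,\sigma_2^*(\varsigma,\sigma_1))$. Using $\nabla_{\sigma_2}\vP^d=0$ at the maximizer to kill the chain-rule term gives $\nabla_\varsigma\vP^g=\nabla_\varsigma\vP^d$ and $\nabla_{\sigma_1}\vP^g=\nabla_{\sigma_1}\vP^d$, both evaluated at $(\bar\varsigma,\bar\sigma_1,\bar\sigma_2)$. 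Since $(\bar\varsigma,\bar\sigma_1)$ is critical for $\vP^g$, the right-hand sides vanish; combined with the $\sigma_2$-stationarity above, the full gradient $\nabla\vP^d(\bar\varsigma,\bar\sigma_1,\bar\sigma_2)$ vanishes, so $(\bar\varsigma,\bar\sigma_1,\bar\sigma_2)$ is a critical point of $\vP^d$ in $S^+_\sharp$.

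Theorem \ref{theorem4} then applies directly: $(\bar\varsigma,\bar\sigma_1,\bar\sigma_2)$ is a global maximizer of $\vP^d$ on $S^+_\sharp$, and the associated $(\bar x,\bar v)=\big(G^{-1}(\bar\varsigma,\bar\sigma_1)c,\ \tfrac12(f+\bar\sigma_1+\bar\sigma_2)\oslash\bar\sigma_2\big)$ is a global minimum of $(\mathcal{P}_b)$. The closing step is a pointwise simplification of $\bar v$: substituting $(\bar\sigma_2)_i=|f_i+(\bar\sigma_1)_i|$ into $\bar v_i=\tfrac12(f_i+(\bar\sigma_1)_i+(\bar\sigma_2)_i)/(\bar\sigma_2)_i$ yields $\bar v_i=1$ when $f_i+(\bar\sigma_1)_i>0$ and $\bar v_i=0$ when $f_i+(\bar\sigma_1)_i<0$, that is $\bar v=\delta(f+\bar\sigma_1)^+$ as in (\ref{3030}), matching the claimed formula (\ref{3031}). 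The only genuine subtlety is the envelope step: one must confirm the partial maximizer is interior and unique so that differentiating through the value function is legitimate, and the strict-concavity computation above is exactly what secures this.
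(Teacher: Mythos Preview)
Your proposal is correct and follows precisely the route the paper indicates: the paper states Theorem \ref{theorem7} as an immediate consequence of Theorem \ref{theorem4} (``By Theorem 3, it can be easily to get next theorem'') without spelling out a proof, and you have supplied exactly the missing details—lifting $(\bar\varsigma,\bar\sigma_1)$ to $S^+_\sharp$ via $(\bar\sigma_2)_i=|f_i+(\bar\sigma_1)_i|$, verifying criticality of $\vP^d$ by the envelope argument, invoking Theorem \ref{theorem4}, and simplifying $\bar v$ to $\delta(f+\bar\sigma_1)^+$.
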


\section{Existence and Uniqueness Criteria}
In this section we study certain
existence and uniqueness conditions for the canonical dual problem to have a
critical point. We first let
\begin{equation}\label{4001}
\bar S^+_\sharp=\{(\varsigma,\sigma)\in \mathbb{R}^1\times \mathbb{R}^{2n}~|~\varsigma\geq -\alpha,\sigma_1\geq 0,\sigma_2\geq0, G\succeq 0\},
\end{equation}

\begin{equation}\label{4001.5}
\partial \bar S^+_\sharp=\{(\varsigma,\sigma)\in \bar S^+_\sharp~|~\det (G)=0\}.
\end{equation}

\begin{equation}\label{4002}
\mathfrak{g}_a=\{(\varsigma,\sigma)\in \mathbb{R}^1\times \mathbb{R}^{2n}~|~\sigma_2\geq0, \det (G)=0\}.
\end{equation}

\begin{lemma}\label{lemma7}If $\mathfrak{g}_a\subset \bar S^+_\sharp$, it holds that
$\partial \bar S^+_\sharp=\mathfrak{g}_a$.
\end{lemma}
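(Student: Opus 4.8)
The plan is to prove the set equality by establishing the two inclusions separately, after first recording the structural observation that both sets are cut out of $\mathbb{R}^1\times\mathbb{R}^{2n}$ by the single equation $\det(G)=0$ together with sign constraints. Explicitly, from (\ref{4001.5}) we may write $\partial \bar S^+_\sharp = \bar S^+_\sharp \cap \{(\varsigma,\sigma)\mid \det(G)=0\}$, while from (\ref{4002}) we have $\mathfrak{g}_a = \{(\varsigma,\sigma)\mid \sigma_2\geq 0\}\cap\{(\varsigma,\sigma)\mid\det(G)=0\}$. Thus the two sets differ only in the side constraints imposed in addition to $\det(G)=0$: membership in $\partial \bar S^+_\sharp$ demands the full list $\varsigma\geq-\alpha$, $\sigma_1\geq 0$, $\sigma_2\geq 0$, $G\succeq 0$ from (\ref{4001}), whereas membership in $\mathfrak{g}_a$ demands only $\sigma_2\geq 0$.

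For the inclusion $\partial \bar S^+_\sharp \subseteq \mathfrak{g}_a$ I would simply take $(\varsigma,\sigma)\in\partial \bar S^+_\sharp$; by definition it lies in $\bar S^+_\sharp$, so in particular $\sigma_2\geq 0$, and it satisfies $\det(G)=0$. These are exactly the two defining conditions of $\mathfrak{g}_a$, so $(\varsigma,\sigma)\in\mathfrak{g}_a$. This direction is unconditional and does not invoke the hypothesis $\mathfrak{g}_a\subset\bar S^+_\sharp$; it reflects only the trivial containment $\bar S^+_\sharp\subseteq\{\sigma_2\geq 0\}$.

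For the reverse inclusion $\mathfrak{g}_a\subseteq\partial \bar S^+_\sharp$ the hypothesis enters. Given $(\varsigma,\sigma)\in\mathfrak{g}_a$, the hypothesis $\mathfrak{g}_a\subset\bar S^+_\sharp$ immediately places $(\varsigma,\sigma)$ in $\bar S^+_\sharp$, supplying the constraints $\varsigma\geq-\alpha$, $\sigma_1\geq 0$ and $G\succeq 0$ that are not built into the definition of $\mathfrak{g}_a$; the remaining requirement $\det(G)=0$ holds because $(\varsigma,\sigma)\in\mathfrak{g}_a$. Hence $(\varsigma,\sigma)$ meets every condition in (\ref{4001.5}), i.e.\ $(\varsigma,\sigma)\in\partial\bar S^+_\sharp$. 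Combining the two inclusions yields $\partial\bar S^+_\sharp=\mathfrak{g}_a$.

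I do not expect a genuine computational obstacle: the statement is essentially a set-theoretic identity, and all of its analytic content is packaged into the hypothesis $\mathfrak{g}_a\subset\bar S^+_\sharp$, which is precisely what forces every point satisfying $\sigma_2\geq 0$ and $\det(G)=0$ to satisfy $G\succeq 0$ and the two remaining sign constraints as well. The only point worth double-checking is that no defining condition of $\partial\bar S^+_\sharp$ is accidentally stronger than what the hypothesis delivers — in particular that the weak inequality $\sigma_2\geq 0$ of (\ref{4001}) is used throughout (rather than the strict $\sigma_2>0$ of (\ref{3001})) — so that the reverse inclusion truly closes.
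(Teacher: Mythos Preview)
Your proof is correct and actually more direct than the paper's. You verify the set equality by a clean double inclusion: $\partial\bar S^+_\sharp\subseteq\mathfrak{g}_a$ is immediate from $\bar S^+_\sharp\subseteq\{\sigma_2\geq 0\}$, and $\mathfrak{g}_a\subseteq\partial\bar S^+_\sharp$ follows at once from the hypothesis $\mathfrak{g}_a\subset\bar S^+_\sharp$ together with $\mathfrak{g}_a\subseteq\{\det G=0\}$. The paper instead argues indirectly: from the hypothesis it first extracts (after projecting out $\sigma_2$) that $\{\det G=0\}\subset\{\varsigma\geq-\alpha,\ \sigma_1\geq 0,\ G\succeq 0\}$, then asserts this forces $\{G\succeq 0\}\subset\{\varsigma\geq-\alpha,\ \sigma_1\geq 0\}$, thereby simplifying $\bar S^+_\sharp$ to $\{\sigma_2\geq 0,\ G\succeq 0\}$ before intersecting with $\{\det G=0\}$. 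That intermediate implication is not entirely trivial and is left unjustified in the paper, whereas your argument sidesteps it completely. What the paper's route buys, if one wanted it, is the auxiliary structural fact that the sign constraints on $\varsigma$ and $\sigma_1$ are redundant in $\bar S^+_\sharp$ under the hypothesis; your route buys brevity and avoids any extra claim beyond the lemma's conclusion.
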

\begin{proof}
It is obvious that $\bar S^+_\sharp$ is a closed convex set. From $\mathfrak{g}_a\subset \bar S^+_\sharp$, we have
\begin{equation}\label{4005}
\{(\varsigma,\sigma_1)\in \mathbb{R}^1\times \mathbb{R}^{n}~|~\det (G)=0\}\subset\{(\varsigma,\sigma_1)\in \mathbb{R}^1\times \mathbb{R}^{n}~|\varsigma\geq -\alpha,\sigma_1\geq 0, G\succeq 0\}\nonumber
\end{equation}
which shows
\begin{equation}\label{4006}
\{(\varsigma,\sigma_1)\in \mathbb{R}^1\times \mathbb{R}^{n}~|~G\succeq 0\}\subset\{(\varsigma,\sigma_1)\in \mathbb{R}^1\times \mathbb{R}^{n}~|\varsigma\geq -\alpha,\sigma_1\geq 0\}.\nonumber
\end{equation}
Then it holds
\begin{eqnarray}\label{4005}
\bar S^+_\sharp
=\{(\varsigma,\sigma)\in \mathbb{R}^1\times \mathbb{R}^{2n}~|~\sigma_2\geq0, G\succeq 0\},\nonumber
\end{eqnarray}
thus together with (\ref{4001.5}) we have
\begin{equation}\label{4005}
\partial \bar S^+_\sharp=\{(\varsigma,\sigma)\in \mathbb{R}^1\times \mathbb{R}^{2n}~|~\sigma_2\geq0, \det (G)=0\}=\mathfrak{g}_a.
\end{equation}

\hfill\qed
\end{proof}

Motivated by the existence and uniqueness criteria given in \cite{Gao2010,Gaobox2009b}, we have the following theorem about existence and uniqueness criteria.

\begin{theorem}\label{theorem9} Given $A\in\mathbb{R}^{n\times n}$ and a symmetric positive semi-definite matrix $B\in\mathbb{R}^{n\times n}$,  $\alpha>0$, $c,f\in \mathbb{R}^n$, such that $S^+_\sharp\neq\emptyset$ and $\mathfrak{g}_a\subset \bar S^+_\sharp$. If for any given $(\varsigma_0,\sigma_0)\in \mathfrak{g}_a$ and $(\varsigma,\sigma)\in S^+_\sharp$,
\begin{equation}\label{4007}
\lim_{t\rightarrow 0^+}\vP^d(\varsigma_0+t\varsigma,\sigma_0+t\sigma)=-\infty,
\end{equation}
then the canonical dual problem $(\mcal{P}^g_+)$ has at least one critical point $(\bar\varsigma,\bar\sigma_1)\in S^+_{\varsigma\sigma_1}$ and the vector
$(\bar{x}, \bar v)=\lpa G^{-1}(\bar\varsigma,\bar\sigma_1)c,\delta(f+\bar\sigma_1)^+\rpa$
is a global optimizer of the primal problem $(\mathcal{P}_b)$. Furthermore, if $A$, $B$ are two diagonal matrices and
$c_i\neq 0,~\forall i=1,\cdots,n$, then the vector $(\bar{x}, \bar v)$ is a unique global minimum point of $(\mathcal{P}_b)$.
\end{theorem}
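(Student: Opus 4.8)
The plan is to reduce everything to the equivalent concave program $(\mathcal{P}^g_+)$, i.e.\ the maximization of $\vP^g(\varsigma,\sigma_1)=-\tfrac12 c^TG^{-1}c-\sum_{i=1}^n(f_i+(\sigma_1)_i)^+-\tfrac12\varsigma^2-\alpha\varsigma$ over the convex set $S^+_{\varsigma\sigma_1}$. First I would record that $\vP^g=\max_{\sigma_2>0}\vP^d$ is concave on $S^+_{\varsigma\sigma_1}$, inheriting concavity from the jointly concave $\vP^d$ (Lemma \ref{lemma2}) by partial maximization over the convex set $\{\sigma_2>0\}$. The existence question then becomes: is the supremum of this concave function attained at an interior point where $G\succ0$?

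For existence I would rule out the two ways a maximizing sequence could escape $S^+_{\varsigma\sigma_1}$. Toward infinity, the terms $-\tfrac12\varsigma^2$ and $-\sum_i(f_i+(\sigma_1)_i)^+$ force $\vP^g\to-\infty$ (using $\varsigma\ge-\alpha$, $\sigma_1\ge0$), so every superlevel set is bounded. Toward the singular boundary, Lemma \ref{lemma7} identifies $\partial\bar S^+_\sharp$ with $\mathfrak{g}_a=\{\det(G)=0\}$, and hypothesis (\ref{4007}) states that $\vP^d\to-\infty$ along every inward ray meeting $\mathfrak{g}_a$; choosing a boundary lift with $\sigma_2>0$ isolates the blow-up in the $\sigma_2$-free term $-\tfrac12 c^TG^{-1}c$, which is precisely the term surviving in $\vP^g$, so $\vP^g\to-\infty$ as $\det(G)\to0$ as well. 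Consequently a maximizing sequence is confined to a compact subset of the open region $\{G\succ0\}$, the concave $\vP^g$ attains its maximum at a critical point $(\bar\varsigma,\bar\sigma_1)$ with $G(\bar\varsigma,\bar\sigma_1)\succ0$, and Theorem \ref{theorem7} turns this into the global optimizer $(\bar x,\bar v)=(G^{-1}(\bar\varsigma,\bar\sigma_1)c,\ \delta(f+\bar\sigma_1)^+)$ of $(\mathcal{P}_b)$.

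For uniqueness with $A,B$ diagonal and $c_i\neq0$, I would prove that $\vP^g$ is strictly concave. Then $G$ is diagonal with entries $g_i=A_{ii}+\varsigma B_{ii}+2(\sigma_1)_i>0$ and $c^TG^{-1}c=\sum_i c_i^2/g_i$, so the only curvature comes from this term and from $-\tfrac12\varsigma^2$ (the $(\cdot)^+$ terms being piecewise linear). Computing the Hessian and completing the square gives, for every $W=(r,s)\in\mathbb{R}^{1+n}$,
$$W^T\nabla^2\vP^g\,W=-r^2-\sum_{i=1}^n\frac{c_i^2}{g_i^{3}}\bigl(B_{ii}r+2s_i\bigr)^2,$$
which vanishes only when $r=0$ and, since $c_i\neq0$ and $g_i>0$, every $B_{ii}r+2s_i=0$, hence $s=0$. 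Thus $\nabla^2\vP^g\prec0$, the critical point is the unique maximizer of $(\mathcal{P}^g_+)$, and therefore $(\bar x,\bar v)$ is the unique global minimizer of $(\mathcal{P}_b)$.

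The step I expect to be the main obstacle is the attainment argument: transferring hypothesis (\ref{4007}), which is phrased for $\vP^d$ along rays entering $\mathfrak{g}_a$, into a genuine $-\infty$ blow-up of the reduced $\vP^g$ near $\{\det(G)=0\}$ (the inequality $\vP^g\ge\vP^d$ runs the wrong way, so one must argue through the $\sigma_2$-independent term), and then confirming that the attained maximizer actually lies in the open set $S^+_{\varsigma\sigma_1}$, i.e.\ that $G(\bar\varsigma,\bar\sigma_1)\succ0$ and $f_i+(\bar\sigma_1)_i\neq0$ for every $i$, the latter requiring control of the kinks of the $(f_i+(\sigma_1)_i)^+$ terms.
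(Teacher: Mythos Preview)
Your proposal is correct in outline but takes a genuinely different route from the paper. The paper never reduces to $\vP^g$; it works with the full dual $\vP^d$ on $S^+_\sharp\subset\mathbb{R}^{2n+1}$ throughout. For existence it combines hypothesis~(\ref{4007}) (blow-up at $\partial\bar S^+_\sharp=\mathfrak{g}_a$, via Lemma~\ref{lemma7}) with a direct computation $\lim_{t\to+\infty}\vP^d(t\varsigma,t\sigma)=-\infty$ to declare $\vP^d$ coercive on the open convex set $S^+_\sharp$, obtains a critical point $(\bar\varsigma,\bar\sigma)\in S^+_\sharp$, and only then reads off the corresponding $(\bar\varsigma,\bar\sigma_1)\in S^+_{\varsigma\sigma_1}$. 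For uniqueness it proves that $\vP^d$ itself is \emph{strictly} concave on $S^+_\sharp$, revisiting the Hessian formula~(\ref{3005}) and splitting into cases on whether $Z_0=(r,s)$ vanishes; the diagonal hypothesis is used to force $\mathrm{Diag}(x)^TG^{-1}\mathrm{Diag}(x)\succ0$ since $x_i=(G^{-1}c)_i\neq0$.

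Your route is more economical: by maximizing out $\sigma_2$ first you collapse to an $(n{+}1)$-dimensional concave program, and your Hessian computation for $\vP^g$ in the diagonal case is shorter and more transparent than the paper's case analysis for $\vP^d$. The price is exactly the obstacle you flagged: hypothesis~(\ref{4007}) is stated for $\vP^d$, and $\vP^g\ge\vP^d$ runs the wrong way. Your fix---lift a boundary point with $\sigma_2>0$ so the $\sigma_2$-terms stay bounded and the divergence must live in the $\sigma_2$-free piece $-\tfrac12 c^TG^{-1}c$---is sound and recovers the needed blow-up for $\vP^g$. The paper sidesteps this entirely by never leaving $\vP^d$. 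As for the kinks $f_i+(\bar\sigma_1)_i=0$: the paper's approach handles this implicitly, since a critical point of $\vP^d$ already has $(\bar\sigma_2)_i>0$ and the optimal-$\sigma_2$ relation $(\bar\sigma_2)_i=|f_i+(\bar\sigma_1)_i|$ forces $f_i+(\bar\sigma_1)_i\neq0$; in your reduced setting you would need to argue this separately, which is the residual gap in your plan.
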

\begin{proof}
It holds from $\mathfrak{g}_a\subset \bar S^+_\sharp$ and Lemma \ref{lemma7} that
$\partial \bar S^+_\sharp=\mathfrak{g}_a$. By the definition, $\bar S^+_\sharp$ is a closed convex set and its interior is $S^+_\sharp$. If for any given $(\varsigma,\sigma)\in S^+_\sharp$
\begin{eqnarray}\label{4008}
\lim_{t\rightarrow +\infty}\vP^d(t\varsigma,t\sigma)
&=&\lim_{t\rightarrow +\infty}(-\frac{1}{2}c^T G^{-1}(t\varsigma,t\sigma_1)c-\frac{1}{4}\sum_{i=1}^n\frac{1}{t{(\sigma_2)}_i}\lpa f_i+t{(\sigma_1)}_i+t{(\sigma_2)}_i\rpa^2\nonumber\\
&&-\frac{1}{2}t^2\varsigma^2-\alpha t\varsigma)=-\infty,
\end{eqnarray}
which with (\ref{4007}) implies that the function $\vP^d(\varsigma,\sigma)$ is coercive on the open convex
set $S^+_\sharp$. Therefore, $(\mcal{P}^\sharp_+)$ has at least one critical point $(\bar\varsigma,\bar\sigma)\in S^+_\sharp$ and the vector $(\bar{x}, \bar v)=(G^{-1}(\bar\varsigma,\bar\sigma_1)c,\frac{1}{2}(f+\bar\sigma_1+\bar\sigma_2)\oslash \bar\sigma_2))$ is
a global minimum point of problem $(\mathcal{P}_b)$. So accordingly $(\mcal{P}^g_+)$ has at least one critical point $(\bar\varsigma,\bar\sigma_1)\in  S^+_{\varsigma\sigma_1}$ and the vector $(\bar{x}, \bar v)=\lpa G^{-1}(\bar\varsigma,\bar\sigma_1)c,\delta(f+\bar\sigma_1)^+\rpa$
is a global minimum point of $(\mathcal{P}_b)$.

Next we will prove the canonical dual function $\vP^d(\varsigma,\sigma)$ is strict concave on $S^+_\sharp$.

From (\ref{3006}), we have
\begin{equation}\label{4009}
W^T\nabla^2\vP^d(\varsigma,\sigma)W=-r^2-(ZZ_0)^TG^{-1}(ZZ_0)+\sum_{i=1}^n-\frac{1}{2(\sigma_2)_i}\lpa s_i-t_i\frac{(\sigma_1)_i+f_i}{(\sigma_2)_i} \rpa^2.
\end{equation}
(a) If $Z_0=\left(\begin{array}{l}
r\\
s
\end{array}\right)=0$, $W\neq 0$ leads to $t\neq 0$, then with ${(\sigma_2)}_i>0$ and $f_i+{(\sigma_1)}_i\neq 0$, we get $$W^T\nabla^2\vP^d(\varsigma,\sigma)W=\sum_{i=1}^n-\frac{1}{2(\sigma_2)_i}\lpa 0-t_i\frac{(\sigma_1)_i+f_i}{(\sigma_2)_i} \rpa^2=\sum_{i=1}^n-\frac{t_i^2}{2((\sigma_2)_i)^3}\lpa f_i+(\sigma_1)_i \rpa^2<0.$$
(b) If $Z_0=\left(\begin{array}{l}
r\\
s
\end{array}\right)\neq 0$, there are two cases.

(b.1) If $r\neq 0$, then $W^T\nabla^2\vP^d(\varsigma,\sigma)W\leq -r^2<0$.

(b.2) If $r=0$, then $s\neq 0$ and $Z_0=\left(\begin{array}{l}
0\\
s
\end{array}\right)$, thus
\begin{equation}\label{4010}
-(ZZ_0)^TG^{-1}(ZZ_0)=-4s^TDiag(x(\varsigma,\sigma_1))^TG^{-1}Diag(x(\varsigma,\sigma_1))s.
\end{equation}
Since $A$ and $B$ are two diagonal matrices, then from (\ref{tjre001}) we know $G$ is a diagonal matrix, which with $G\succ 0$ shows that $G^{-1}$ is a diagonal matrix whose all diagonal elements are positive. Then it follows from $x(\varsigma,\sigma_1) = G^{-1}c$ and $c_i\neq 0,~\forall i=1,\cdots,n$ that
$$x(\varsigma,\sigma_1)_i\neq 0,~\forall i=1,\cdots,n.$$
By $G^{-1}\succ 0$, we find $Diag(x(\varsigma,\sigma_1))^TG^{-1}Diag(x(\varsigma,\sigma_1))\succ 0$, then it is observed from (\ref{4010}) that
$$-(ZZ_0)^TG^{-1}(ZZ_0)<0,$$
hence
$$W^T\nabla^2\vP^d(\varsigma,\sigma)W\leq -(ZZ_0)^TG^{-1}(ZZ_0)<0.$$

From above two cases, we can see $\vP^d(\varsigma,\sigma)$ is strictly concave on $S^+_\sharp$.
Thereby $(\mcal{P}^\sharp_+)$ has a unique critical point in $S^+_\sharp$, which implies $(\mcal{P}^g_+)$ has
a unique critical point in $S^+_{\varsigma\sigma_1}$ and the primal problem has a unique global
minimum.

\hfill\qed
\end{proof}

\section{Application to Decoupled Problem}
In this section, we discuss the application of presented theory to the decoupled problems. Consider the decoupled minimization problem as follows:
\begin{equation}\label{5001}
\min \left\{P(x,v)= \frac{1}{2}\Big(\frac{1}{2}\sum_{i=1}^nb_ix_i^2-\alpha \Big)^2+\sum_{i=1}^n\lpa\frac{1}{2}a_ix_i^2-c_ix_i-f_iv_i\rpa\right\}
\end{equation}
\begin{equation}\label{5002}
s.t.~~-v_i\leq x_i\leq v_i,v_i\in\{0,1\}, i=1,\ldots,n.
\end{equation}
For simplicity, we may take $A = Diag (a)$
and $B = Diag (b)$ stand for two diagonal matrices with diagonal elements $a = \{a_i\} \in \mathbb{R}^{n}$ and $b = \{b_i\} \in \mathbb{R}^{n}$
respectively. The canonical dual function of above problem has the following simple form
\begin{equation}\label{5003}
\vP^d(\varsigma,\sigma)=-\frac{1}{2}\sum_{i=1}^n\lpa\frac{c_i^2}{a_i+\varsigma b_i+2{(\sigma_1)}_i}+\frac{\lpa f_i+{(\sigma_1)}_i+{(\sigma_2)}_i\rpa^2}{2{(\sigma_2)}_i}\rpa-\frac{1}{2}\varsigma^2-\alpha\varsigma.
\end{equation}
Due to $\nabla\vP^d(\varsigma,\sigma)=0$, then when ${(\sigma_2)}_i\neq0$ and $c_i\neq 0$, we can obtain the critical points of $\vP^d(\varsigma,\sigma)$ in the following
\begin{equation}\label{50041}
\varsigma=\frac{1}{2}\sum_{i=1}^nb_i-\alpha,~~{(\sigma_1)}_i\in M_i=\left\{-\frac{1}{2}\Big( a_i+\Big(\frac{1}{2}\sum_{i=1}^nb_i-\alpha\Big)b_i\pm c_i\Big)\right\},
\end{equation}
\begin{equation}\label{50042}
{(\sigma_2)}_i\in N_i=\left\{f_i-\frac{1}{2}\Big( a_i+\Big(\frac{1}{2}\sum_{i=1}^nb_i-\alpha\Big)b_i\pm c_i\Big)\right\},~~\forall i=1,\ldots,n.
\end{equation}
For ${(\sigma_2)}_i>0$, using Theorem \ref{th:Complementary-Dual}, we can derive the accompanying primal solution
\begin{equation}\label{5005}
(x_i,v_i)=\lpa-\frac{c_i}{a_i+(\frac{1}{2}\sum_{i=1}^nb_i-\alpha) b_i+2{(\sigma_1)}_i},\frac{ f_i+{(\sigma_1)}_i+{(\sigma_2)}_i}{2{(\sigma_2)}_i}\rpa,~~\forall i=1,\ldots,n.
\end{equation}
According to the fact that there are two possible solutions for each component of $\sigma=(\sigma_1,\sigma_2)\in \mathbb{R}^{2n}$,
together with (\ref{50041}) and (\ref{50042}), it follows that the canonical dual function $\vP^d$ has $2^n$ critical points.
Then from Theorem \ref{theorem4}, it is easy to show the global minimum of the primal problem will be arrived at by next theorem.
\begin{theorem}\label{theorem10}
Given $A=Diag(a)\in\mathbb{R}^{n\times n}$ and a symmetric positive semi-definite matrix $B=Diag(b)\in\mathbb{R}^{n\times n}$,  $\alpha>0$, $c,f\in \mathbb{R}^n$,
$c_i\neq 0$ for $\forall i$, if
\begin{equation}
\max M_i>0~~and~~\max N_i>0,~~\forall i=1,\ldots,n.
\end{equation}
then $\vP^d(\varsigma,\sigma)$ has a unique critical point
\begin{eqnarray}
(\bar \varsigma,\bar \sigma)=(\bar \varsigma,\bar \sigma_1,\bar \sigma_2)
=\Bigg(\frac{1}{2}\sum_{i=1}^nb_i-\alpha,\big\{\max M_i,i=1,\ldots,n\big\},\big\{\max N_i,i=1,\ldots,n\big\}\Bigg)\in S^+_\sharp,\nonumber
\end{eqnarray}
which is a global maximizer of $\vP^d(\varsigma,\sigma)$ on $S^+_\sharp$, and
$$(\bar x,\bar v)=\lpa\left\{\frac{c_i}{|c_i|} \right\}, e \rpa$$
is a global minimum of $P(x, v)$ on $\x_v$.
\end{theorem}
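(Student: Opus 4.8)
The plan is to recognize $(\bar\varsigma,\bar\sigma)$ as exactly the ``maximal'' representative among the $2^n$ critical points of $\vP^d$ already exhibited in (\ref{50041})--(\ref{50042}), and then to invoke the structural results proved above. First I would check that $(\bar\varsigma,\bar\sigma)\in S^+_\sharp$. Because $B\succeq0$ forces each $b_i\geq0$, we get $\bar\varsigma=\frac{1}{2}\sumin b_i-\alpha\geq-\alpha$; the hypotheses $\max M_i>0$ and $\max N_i>0$ give $\bar\sigma_1>0$ and $\bar\sigma_2>0$; and since $A,B$ are diagonal, $G(\bar\varsigma,\bar\sigma_1)$ is diagonal with entries $G_{ii}=a_i+\bar\varsigma b_i+2\max M_i$. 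The one calculation that needs care here is that this collapses to $G_{ii}=|c_i|$ (using $\max M_i=\frac{1}{2}(|c_i|-a_i-\bar\varsigma b_i)$), which is positive since $c_i\neq0$; hence $G\succ0$ and $(\bar\varsigma,\bar\sigma)\in S^+_\sharp$.

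Next I would settle uniqueness and global optimality. Uniqueness in $S^+_\sharp$ follows from the strict-concavity argument in the proof of Theorem \ref{theorem9}: with $A,B$ diagonal and $c_i\neq0$ for all $i$, that argument applies verbatim, so $\vP^d$ has at most one critical point on the open convex set $S^+_\sharp$. Since the point just constructed is one, it is the unique critical point in $S^+_\sharp$; the other $2^n-1$ critical points of (\ref{50041})--(\ref{50042}) must therefore fail at least one defining constraint of $S^+_\sharp$. Global maximality is then immediate from Theorem \ref{theorem4}, which guarantees that any critical point in $S^+_\sharp$ maximizes the concave function $\vP^d$ over $S^+_\sharp$.

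Finally I would recover the primal pair from the closed forms (\ref{eq:solvedx1})--(\ref{eq:solvedx2}) provided by Theorem \ref{th:Complementary-Dual}. As $G$ is diagonal with $G_{ii}=|c_i|$, the formula $\bar x=G^{-1}(\bar\varsigma,\bar\sigma_1)c$ gives $\bar x_i=c_i/|c_i|$. For $\bar v$ the crucial observation is that the matching sign pattern at a critical point makes $(\bar\sigma_2)_i=f_i+(\bar\sigma_1)_i$ (since $N_i=f_i+M_i$ as sets, so the maximal elements correspond to the same sign), whence $f_i+(\bar\sigma_1)_i+(\bar\sigma_2)_i=2(\bar\sigma_2)_i$ and $\bar v_i=\tfrac12\cdot 2(\bar\sigma_2)_i/(\bar\sigma_2)_i=1$; equivalently, Theorem \ref{theorem7} gives $\bar v=\delta(f+\bar\sigma_1)^+=e$ because $f_i+(\bar\sigma_1)_i=\max N_i>0$. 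Theorem \ref{theorem4} then certifies $(\bar x,\bar v)=(\{c_i/|c_i|\},e)$ as a global minimizer of $P$ on $\x_v$.

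The computations are mostly routine verification, so I expect the only genuinely conceptual point to be the uniqueness claim: $\vP^d$ possesses $2^n$ critical points globally, and one must argue that precisely one of them lies in the feasibility set $S^+_\sharp$ under the stated positivity hypotheses. Leaning on the strict concavity of Theorem \ref{theorem9} is what lets me avoid an explicit case analysis over the $2^n$ sign patterns, since strict concavity on the open convex set $S^+_\sharp$ forces at most one critical point there irrespective of the global count.
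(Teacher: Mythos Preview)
Your verification is correct and matches the paper's intended route: the paper gives no explicit proof of Theorem~\ref{theorem10}, remarking only that it follows ``from Theorem~\ref{theorem4}'' once the critical points (\ref{50041})--(\ref{50042}) are in hand, and you have filled in precisely those details (membership in $S^+_\sharp$, the identification $G_{ii}=|c_i|$, and the recovery $\bar x_i=c_i/|c_i|$, $\bar v_i=1$).

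One small point on the uniqueness step. The strict-concavity argument in the proof of Theorem~\ref{theorem9} does not apply \emph{verbatim} on all of $S^+_\sharp$: its case~(a) uses the side condition $f_i+(\sigma_1)_i\neq 0$, which is built into $S^+_{\varsigma\sigma_1}$ but not into $S^+_\sharp$, so the Hessian can be merely negative semidefinite at points of $S^+_\sharp$ where some $f_i+(\sigma_1)_i=0$. Uniqueness in $S^+_\sharp$ is obtained more cleanly straight from the critical-point equations: since $G$ is diagonal and any critical point satisfies $G_{ii}=\pm c_i$, the requirement $G\succ 0$ already forces $G_{ii}=|c_i|$, hence $(\sigma_1)_i=\max M_i$ for every $i$, and then $(\sigma_2)_i=f_i+(\sigma_1)_i=\max N_i$ as you note. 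Your own computation $G_{ii}=|c_i|$ contains exactly this observation, so you can simply replace the appeal to Theorem~\ref{theorem9} by this direct argument and the proof goes through unchanged.
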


\section{Examples}
Now we give a summary of numerical experiments to illustrate our method, where diagonal matrices $A$ and $B$, vectors $f$ and $c$ are chosen and taken at random. These examples are classified into three cases and in every case we give several representatives. In the first case we consider the decoupled problems satisfying the conditions of Theorem \ref{theorem10}, whose results are consistent with Theorem \ref{theorem10} and show our method is promising for decoupled problems. In the second case some general decoupled problems not satisfying Theorem \ref{theorem10} are computed by our method and the global solution are also obtained.  In the last case, our method is tested for some general problems. All of performed examples show our method is very effective.

\subsection{Case 1: Decoupled Problems satisfying Theorem \ref{theorem10}}
In the following three decoupled examples, we can verify the conditions of Theorem \ref{theorem10} are satisfied, so
 a unique critical point $(\bar\varsigma,\bar\sigma)$ of $\vP^d(\varsigma,\sigma)$ on $S^+_\sharp$ and a global minimum point $(\bar x,\bar v)$ of $P(x, v)$ on $\x_v$ are obtained by Theorem \ref{theorem10}. For simplicity we denote $\lambda_{min}(\bar\varsigma,\bar\sigma_1)$ be the smallest eigenvalue of $G$.
 \newtheorem{example}{{Example}}
\begin{example}
Set $\alpha=10$, $A=Diag (1,-1,1,5,2)$, $B=Diag (2,4,1,4,2)$, $f=(20,12,-1,$ $1,13)$ and $c=(-8,-9,10,9,-5)$, in which $n=5$.
\end{example}
\begin{example}
Set $\alpha=20$, $A=Diag (7,9,6,-5,4,10,9,8)$, $B=Diag (3,5,4,3,1,7,5,7)$, $f=(13,-3,3,11,10,16,16,14)$ and $c=(7,-6,8,-1,-5,8,-8,7)$, in which $n=8$.
\end{example}
\begin{example}
Set $\alpha=25$, $A=Diag (2,8,7,3,6,14,10,1,-6,9)$, $B=Diag (9,1,2,1,6,8,5,$ $3,9,6)$, $f=(6,1,4,13,6,15,17,20,3,16)$ and $c=(19,14,-9,-9,-8,17,-22,-14,-8,18)$, in which $n=10$.
\end{example}

The values of $\frac{1}{2}\sum_{i=1}^nb_i-\alpha$, $M_i$ and $N_i$ from (\ref{50041}) and (\ref{50042}) are first needed to be computed and are listed
in Table 1 as follows. Then it can be found that conditions of Theorem \ref{theorem10} are all satisfied.

\begin{table}[H]
\centering
\scalebox{0.8}[0.8]{%
\begin{tabular}{|c|c|l|l|l|l|}
\hline
Experiments &$\frac{1}{2}\sum_{i=1}^nb_i-\alpha$ &$M_i$ & $Max~M_i$ & $N_i$& $Max~N_i$\\
\hline
\multirow{5}{*}{Example 1}&\multirow{5}{*}{-3.5} &$M_1=\{7,-1\}$ & 7 & $N_1=\{27,19\}$ & 27\\
&& $M_2=\{12,3\}$ & 12 & $N_2=\{24,15\}$& 24\\
& &$M_3=\{-3.75,6.25\}$ & 6.25 &$N_3=\{-4.75,5.25\}$ & 5.25\\
& &$M_4=\{0,9\}$ & 9 &$N_4=\{1,10\}$ & 10\\
& &$M_5=\{5,0\}$ & 5 & $N_5=\{18,13\}$& 18\\
\hline
\multirow{8}{*}{Example 2}&\multirow{8}{*}{-2.5} &$M_1=\{-3.25,3.75\}$ & 3.75& $N_1=\{9.75,16.75\}$ & 16.75 \\
&& $M_2=\{4.75,-1.25\}$ & 4.75 & $N_2=\{1.75,-4.25\}$& 1.75\\
& &$M_3=\{-2,6\}$ & 6 &$N_3=\{1,9\}$ & 9\\
& &$M_4=\{6.75,5.75\}$ & 6.75 &$N_4=\{17.75,16.75\}$ & 17.75\\
& &$M_5=\{1.75,-3.25\}$ & 1.75 & $N_5=\{11.75,6.75\}$& 11.75\\
& &$M_6=\{-0.25,7.75\}$ & 7.75 &$N_6=\{15.75,23.75\}$ & 23.75\\
& &$M_7=\{5.75,-2.25\}$ & 5.75 &$N_7=\{21.75,13.75\}$ & 21.75\\
& &$M_8=\{1.25,8.25\}$ & 8.25 & $N_8=\{15.25,22.25\}$& 22.25\\
\hline
\multirow{10}{*}{Example 3}&\multirow{10}{*}{0} &$M_1=\{-10.5,8.5\}$ & 8.5 &$N_1=\{-4.5,14.5\}$ & 14.5\\
&& $M_2=\{-11,3\}$ & 3 & $N_2=\{-10,4\}$& 4\\
& &$M_3=\{1,-8\}$ & 1 &$N_3=\{5,-4\}$ & 5\\
& &$M_4=\{3,-6\}$ & 3 &$N_4=\{16,7\}$ & 16\\
& &$M_5=\{1,-7\}$ & 1 & $N_5=\{7,-1\}$& 7\\
& &$M_6=\{-15.5,1.5\}$ & 1.5 &$N_6=\{-0.5,16.5\}$ & 16.5\\
& &$M_7=\{6,-16\}$ & 6 &$N_7=\{23,1\}$ & 23\\
& &$M_8=\{6.5,-7.5\}$ & 6.5 & $N_8=\{26.5,12.5\}$& 26.5\\
& &$M_9=\{7,-1\}$ & 7 &$N_9=\{10,2\}$ & 10\\
& &$M_{10}=\{-13.5,4.5\}$ & 4.5 & $N_{10}=\{2.5,20.5\}$& 20.5\\
\hline
\end{tabular}}
\caption{Conditions of Theorem \ref{theorem10} are satisfied in Examples 1-3.}
\end{table}

On the one hand, by Theorem \ref{theorem10} we know $\varsigma=\frac{1}{2}\sum_{i=1}^nb_i-\alpha$, $(\bar \sigma_1)_i=\max M_i$, $(\bar \sigma_2)_i=\max N_i$ and $(\bar x,\bar v)=\lpa\left\{\frac{c_i}{|c_i|} \right\}, e \rpa$,
so from Table 1 we can easily get the corresponding $(\bar\varsigma,\bar\sigma)$ and $(\bar x,\bar v)$ for Examples 1-3 listed in Table 2.
\begin{table}[!hbp]
\centering
\scalebox{0.78}[0.78]{%
\begin{tabular}{|c|l|l|l|}
\hline
Experiments &$\bar\varsigma$ &$\bar\sigma$  & Primal solution $(\bar x, \bar v)$ \\
\hline
\multirow{2}{*}{Example 1}&$\bar\varsigma=-3.5$ &$\bar\sigma_1=(7,12,6.25,9,5)$, &$\bar x=(-1,-1,1,1,-1)$, \\
& & $\bar\sigma_2=(27,24,5.25,10,18)$.&  $\bar v=(1,1,1,1,1)$.  \\
\hline
\multirow{3}{*}{Example 2} & $\bar\varsigma=-2.5$ &$\bar\sigma_1=(3.75,4.75,6,6.75,1.75,7.75,5.75,8.25)$,  &$\bar x=(1,-1,1,-1,-1,1,-1,1)$\\
&   & $\bar\sigma_2=(16.75,1.75,9,17.75,11.75,23.75,21.75,22.25)$.&$\bar v=(1,1,1,1,1,1,1,1)$.  \\
\hline
\multirow{3}{*}{Example 3} &$\bar\varsigma=0$  &$\bar\sigma_1=(8.5,3,1,3,1,1.5,6,6.5,7,4.5)$, &$\bar x=(1,1,-1,-1,-1,1,-1,-1,-1,1)$, \\
&   & $\bar\sigma_2=(14.5,4,5,16,7,16.5,23,26.5,10,20.5)$.& $\bar v=(1,1,1,1,1,1,1,1,1,1)$. \\
\hline
\end{tabular}}
\caption{Results of Theorem \ref{theorem10} in Examples 1-3.}
\end{table}

On the other hand, we can also get
$(\bar\varsigma,\bar\sigma)$ by solving the canonical dual problem $(\mcal{P}^\sharp_+)$, then $(\bar x,\bar v)$ can be computed by $(\bar{x}, \bar v)=\lpa G^{-1}(\bar\varsigma,\bar\sigma_1)c,\frac{1}{2}(f+\bar\sigma_1+\bar\sigma_2)\oslash \bar\sigma_2)\rpa$. The corresponding results are listed below:

\begin{table}[!hbp]
\centering
\scalebox{0.7}[0.7]{%
\begin{tabular}{|c|l|c|c|l|c|}
\hline
Experiments &Dual solution $(\bar\varsigma,\bar\sigma)$ &$\vP^d(\bar\varsigma,\bar\sigma)$ &$\lambda_{min}(\bar\varsigma,\bar\sigma_1)$ & Primal solution $(\bar x, \bar v)$& $P(\bar x, \bar v)$ \\
\hline
\multirow{3}{*}{Example 1}&$\bar\varsigma=-3.5$, &\multirow{3}{*}{-75.875} & \multirow{3}{*}{5}&$\bar x=(-1,-1,1,1,-1)$,& \multirow{3}{*}{-75.875} \\
& $\bar\sigma_1=(7,12,6.25,9,5)$,& & & $\bar v=(1,1,1,1,1)$. & \\
& $\bar\sigma_2=(27,24,5.25,10,18)$.& & & & \\
\hline
\multirow{3}{*}{Example 2} & $\bar\varsigma=-2.5$, &\multirow{3}{*}{-102.875} & \multirow{3}{*}{1}&$\bar x=(1,-1,1,-1,-1,1,-1,$& \multirow{3}{*}{-102.875} \\
& $\bar\sigma_1=(3.75,4.75,6,6.75,1.75,7.75,5.75,8.25)$,  & && $1)$, & \\
& $\bar\sigma_2=(16.75,1.75,9,17.75,11.75,23.75,21.75,22.25)$. &  &&$\bar v=(1,1,1,1,1,1,1,1)$.  & \\
\hline
\multirow{3}{*}{Example 3} &$\bar\varsigma=0$,  &\multirow{3}{*}{-212}&\multirow{3}{*}{8} &$\bar x=(1,1,-1,-1,-1,1,-1,$ & \multirow{3}{*}{-212} \\
& $\bar\sigma_1=(8.5,3,1,3,1,1.5,6,6.5,7,4.5)$,  & & &$-1,-1,1)$, & \\
& $\bar\sigma_2=(14.5,4,5,16,7,16.5,23,26.5,10,20.5)$. &  & &$\bar v=(1,1,1,1,1,1,1,1,1,1)$. & \\
\hline
\end{tabular}}
\caption{Results by solving problem $(\mcal{P}^\sharp_+)$ in Examples 1-3.}
\end{table}

From Tables 2 and 3, it can be seen that the results from Theorem \ref{theorem10} are consistent with the ones by our canonical dual method. And
 the fact that $\bar\varsigma\geq -\alpha,\bar\sigma_1\geq 0,\bar\sigma_2\geq0, G(\bar\varsigma,\bar\sigma_1)\succeq 0$ in every example indicates
$(\bar \varsigma,\bar \sigma)\in S^+_\sharp$. By Theorem \ref{theorem10}, the solution of primal problem is obtained. It is verified that our method is promising for decoupled problems when the conditions of Theorem \ref{theorem10} are satisfied for the decoupled problems.

\subsection{Part 2: Decoupled Problems where Theorem \ref{theorem10} are not satisfied}
It needs to say that there are many decoupled problems not to satisfy Theorem \ref{theorem10}. Here we choose three of them to indicate the details. Let $n=5$ and $\alpha=10$, diagonal matrices $A$ and $B$, vectors $f$ and $c$ are chosen and taken at random.

\begin{example}
Set $A=Diag (6,3,9,9,2)$, $B=Diag (2,4,5,4,3)$, $f=(5,4,4,20,9)$ and $c=(1,-9,-6,3,-5)$.
\end{example}
\begin{example}
Set $A=Diag (1,-1,1,4,4)$, $B=Diag (1,1,1,4,5)$, $f=(1,-51,-1,-11,-61)$ and $c=(3,0,1,-2,0)$.
\end{example}
\begin{example}
Set $A=Diag (5,-1,2,5,1)$, $B=Diag (5,2,2,1,4)$, $f=(3,-35,-1,11,15)$ and $c=(7,0,4,-6,10)$.
\end{example}

\begin{table}[H]
\centering
\scalebox{0.8}[0.8]{%
\begin{tabular}{|c|c|l|l|l|l|}
\hline
Experiments &$\frac{1}{2}\sum_{i=1}^nb_i-\alpha$ &$M_i$ & $Max~M_i$ & $N_i$& $Max~N_i$\\
\hline
\multirow{5}{*}{Example 4}&\multirow{5}{*}{-1} &$M_1=\{-2.5,-1.5\}$ & -1.5 & $N_1=\{2.5,3.5\}$ & 3.5\\
&& $M_2=\{5,-4\}$ & 5 & $N_2=\{9,0\}$& 9\\
& &$M_3=\{1,-5\}$ & 1 &$N_3=\{5,-1\}$ & 5\\
& &$M_4=\{-4,-1\}$ & -1 &$N_4=\{16,19\}$ & 19\\
& &$M_5=\{3,-2\}$ & 3 & $N_5=\{12,7\}$& 12\\
\hline
\end{tabular}}
\caption{Conditions of Theorem \ref{theorem10} are not all satisfied in Examples 4.}
\end{table}

From Table 4, we find that $Max~M_1=-1.5<0$ which makes the conditions of Theorem \ref{theorem10} are not satisfied. We solve the simple form problem $(\mcal{P}^g_+)$ instead of the canonical dual problem $(\mcal{P}^\sharp_+)$ and use
Theorem \ref{theorem7} to obtain the analytic solution $(x, v)$ to primal problem $(\mathcal{P}_b)$. The corresponding results are listed below:

\begin{table}[!hbp]
\centering
\scalebox{0.8}[0.8]{%
\begin{tabular}{|c|l|c|c|l|c|}
\hline
Experiments &Dual solution $(\bar\varsigma,\bar\sigma_1)$ &$\vP^g(\bar\varsigma,\bar\sigma_1)$ &$\lambda_{min}(\bar\varsigma,\bar\sigma_1)$ & Primal solution $(\bar x, \bar v)$& $P(\bar x, \bar v)$ \\
\hline
\multirow{2}{*}{Example 4}&$\bar\varsigma=-1.82$, &\multirow{2}{*}{-51.7281} & \multirow{2}{*}{2.3593}& $\bar x=(0.424,-1,-1,1,-1)$,& \multirow{2}{*}{-51.7281} \\
& $\bar\sigma_1=(0,6.641,3.051,0.641,4.231)$.& & & $\bar v=(1,1,1,1,1)$. & \\
\hline
\end{tabular}}
\caption{Results by solving problem $(\mcal{P}^g_+)$ in Examples 4.}
\end{table}

It is obvious that there exists some $c_i=0$ in Examples 5 and 6, which does not satisfy the conditions of Theorem \ref{theorem10}. We also solve them by the simple form problem $(\mcal{P}^g_+)$, whose results are illustrated in Table 6.

\begin{table}[!hbp]
\centering
\scalebox{0.85}[0.85]{%
\begin{tabular}{|c|l|c|c|l|c|}
\hline
Experiments &Dual solution $(\bar\varsigma,\bar\sigma_1)$ &$\vP^g(\bar\varsigma,\bar\sigma_1)$ &$\lambda_{min}(\bar\varsigma,\bar\sigma_1)$& Primal solution $(\bar x, \bar v)$& $P(\bar x, \bar v)$ \\
\hline
\multirow{2}{*}{Example 5}&$\bar\varsigma=-7$, &\multirow{2}{*}{32.5} &\multirow{2}{*}{1} &$\bar x=(1,0,1,-1,0)$,& \multirow{2}{*}{32.5} \\
& $\bar\sigma_1=(4.5,34.987,3.5,13,54.367)$.& & & $\bar v=(1,0,1,1,0)$. & \\
\hline
\multirow{2}{*}{Example 6} & $\bar\varsigma=-4$, &\multirow{2}{*}{-40.5 } & \multirow{2}{*}{2.22424 }&$\bar x=(1,0,1,-1,1)$,& \multirow{2}{*}{-40.5 } \\
& $\bar\sigma_1=(11,5.612,5,2.5,12.5)$.  &  && $\bar v=(1,0,1,1,1)$. & \\
\hline
\end{tabular}}
\caption{Results by solving problem $(\mcal{P}^g_+)$ in Examples 5 and 6.}
\end{table}

From Tables 5-6, we have $\bar\varsigma\geq -\alpha,\bar\sigma_1\geq 0, G(\bar\varsigma,\bar\sigma_1)\succ 0, f_i+{(\bar\sigma_1)}_i\neq 0, \forall i=1,\cdots,n$ in every example, so
$(\bar \varsigma,\bar \sigma_1)\in S^+_{\varsigma\sigma_1}$. By Theorem \ref{theorem7}, the solution of primal problem is obtained. Thus when the conditions of Theorem \ref{theorem10} are not all satisfied, our method is also effective for the decoupled problem.

\subsection{Part 3: General Nonconvex Problems}
For general nonconvex problems in this part, we solve the simple form problem $(\mcal{P}^g_+)$ and use
Theorem \ref{theorem7} to obtain the analytic solution $(x, v)$ to primal problem $(\mathcal{P}_b)$. Two general nonconvex examples not decoupled are tested and the corresponding results are listed below:

\begin{example}
Set $$
 A=\begin{bmatrix} 4 &  0&  1  \\
0 &  -4&  -6  \\
1 &  -6&  4 \end{bmatrix}~~\wand~~
 B=\begin{bmatrix} 7 &  -3&  -4\\
-3 &  8&  2\\
-4 &  2&  10  \end{bmatrix},
$$  $\alpha=8$, $f=(3,2,3)$ and $c=(10,6,7)$.
\end{example}

\begin{example} Set $$
 A=\begin{bmatrix} 15 &  3&  -3&  -2&  -4\\
3 &  21&  -5&  0& 2\\
-3 &  -5&  12&  0&  2\\
-2 &  0&  0&  14&  3\\
-4 &  -2&  2&  3&  6 \end{bmatrix}~~\wand~~
 B=\begin{bmatrix} 13 &  2&  -4&  4&  -6\\
2 &  6&  -4&  1& -2\\
-4 &  -4&  6&  0&  -3\\
4 &  1&  0&  7&  -7\\
-6 &  -2&  -3&  -7&  21 \end{bmatrix},
$$  $\alpha=4$, $f=(6,-2,5,4,10)$ and $c=(7,-3,10,-4,-3)$.
\end{example}

The corresponding results are listed below:

\begin{table}[H]
\centering
\scalebox{0.8}[0.8]{%
\begin{tabular}{|c|l|c|c|l|c|}
\hline
Experiments &Dual solution $(\bar\varsigma,\bar\sigma_1)$ &$\vP^g(\bar\varsigma,\bar\sigma_1)$ &$\lambda_{min}(\bar\varsigma,\bar\sigma_1)$ & Primal solution $(\bar x, \bar v)$& $P(\bar x, \bar v)$ \\
\hline
\multirow{2}{*}{Example 7}&$\bar\varsigma=-0.5$, &\multirow{2}{*}{-33.875} & \multirow{2}{*}{1.58694}& $\bar x=(1,1,1)$,& \multirow{2}{*}{-33.875} \\
& $\bar\sigma_1=(2.5,9.75,6)$.& & & $\bar v=(1,1,1)$. & \\
\hline
\multirow{2}{*}{Example 8}&$\bar\varsigma=0.088$, &\multirow{2}{*}{-32.8777} & \multirow{2}{*}{5.54327}& $\bar x=(0.556,0,0.978,-0.174,-0.225)$,& \multirow{2}{*}{-32.8777} \\
& $\bar\sigma_1=(0,1.994,0,0,0)$.& & & $\bar v=(1,0,1,1,1)$. & \\
\hline
\end{tabular}}
\caption{Results by solving problem $(\mcal{P}^g_+)$ in Examples 7 and 8.}
\end{table}

From Table 7, it holds that  $\bar\varsigma\geq -\alpha,\bar\sigma_1\geq 0, G(\bar\varsigma,\bar\sigma_1)\succ 0, f_i+{(\bar\sigma_1)}_i\neq 0, \forall i=1,\cdots,n$ in every example, so
$(\bar \varsigma,\bar \sigma_1)\in S^+_{\varsigma\sigma_1}$. By Theorem \ref{theorem7}, the solution of primal problem is obtained. So our method is also effective for the general problems.

\section{Conclusions and further work}\label{se:concl}
In this paper we propose a canonical duality method for solving a mixed-integer nonconvex fourth-order polynomial minimization problem
with fixed cost terms. By rewriting the box constraints in a relaxed quadratic form, a relaxed reformulation of the primal
problem $(\mathcal{P}_b)$ is obtained, then the canonical dual problem $(\mcal{P}^\sharp)$ is defined and the complementary-dual principle is proved.
The primal problem $(\mathcal{P}_b)$ is canonically dual to a concave maximization
problem $(\mcal{P}^\sharp_+)$ where a useful space $S^+_\sharp$ is introduced. This constrained nonconvex problem $(\mathcal{P}_b)$ in $\mathbb{R}^{2n}$ can be transformed into a continuous concave maximization dual problem $(\mcal{P}^g_+)$ in $\mathbb{R}^{n+1}$  without duality gap. The global optimality conditions are proposed and the existence and uniqueness criteria are illustrated. Application to the decoupled mixed-integer
problem is illustrated and analytic solution for a global minimum is obtained under some suitable conditions. Several examples are given to show our method is effective. Canonical duality theory is a potentially powerful methodology, which can be used to model complex
systems to a wide class of discrete and continuous problems in global optimization
and nonconvex analysis. The ideas and results with canonical duality theory presented in this paper can be used or generalized for solving other type of problems in the future. \\


\begin{thebibliography}{222}
\small{


\bibitem{Padberg1985} Padberg M.W., Van Roy T.J., Wolsey L.A.: Valid linear inequalities for fixed
charge problems. Oper. Res. 33, 842-861(1985).

\bibitem{Aardal1998} Aardal, K.: Capacitated facility location: separation algorithms and computational
experience. Math. Program. 81(2) Ser.B, 149-175(1998).

\bibitem{Dukwon1999} Kim D., Pardalos P.M.: A solution approach to the fixed charge network
flow problem using a dynamic slope scaling procedure. Operations Research Letters. 24:195-203(1999).

\bibitem{Glover2005} Glover F., Sherali H.D.: Some classes of valid inequalities and convex hull characterizations
for dynamic fixed-charge problems under nested constraints. Ann.
Oper. Res. 40(1), 215-234(2005).

\bibitem{Steffen2009} Rebennack S., Nahapetyan A., Pardalos P.M.: Bilinear modeling solution approach for fixed charge network flow problems. Optim Lett. 3:347-355(2009).

\bibitem{Mathieu2013} Vyve M.V.: Fixed-charge transportation on a path: optimization, LP formulations and separation. Math. Program., Ser. A. 142:371-395(2013).

\bibitem{Agra2014} Agra A., Doostmohammadi M.: Facets for the single node fixed-charge network set with a node set-up variable. Optim Lett. 8:1501-1515(2014).


\bibitem{Adlakha2015} Adlakha V., Kowalski K.: Fractional Polynomial Bounds for the Fixed Charge Problem. J Optim Theory Appl. 164:1026-1038(2015).

\bibitem{Housh2015} Housh M., Cai X.M.: Successive smoothing algorithm for solving large-scale optimization models with fixed cost. Ann Oper Res. 229:475-500(2015).

\bibitem{Gaocost2008} Gao D.Y., Ruan N., Sherali H.D.: Canonical Dual Solutions for Fixed Cost Quadratic Programs. in: Chinchuluun A., Pardalos P.M., Enkhbat R., Tseveendorj I. (eds.). Optimization and Optimal Control, pp. 139-156, Springer, New York(2010).


\bibitem{Gaocost2000}Gao, D.Y.: Canonical dual transformation method and generalized triality theory in nonsmooth global optimization. J. Global Optim. 17, 127-160(2000).

\bibitem{FangSC2008} Fang S.C., Gao D.Y., Shue R.L., Wu S.Y., Canonical dual approach to solving
0-1 quadratic programming problems. J. Ind. Manage. Optim. 4(1), 125-142(2008).

\bibitem{Gao2004} Gao D.: Canonical duality theory and solutions to constrained nonconvex quadratic programming.
J. Global Optim. 29, 377-399(2004).

\bibitem{Gao2010} Gao D., Ruan, N.: On the Solutions to quadratic minimization problems with box and integer constraints.
J. Global Optim. 47, 463-484(2010).


\bibitem{Gaobox2007a} Gao D.Y.: Solutions and optimality to box constrained nonconvex minimization problems. J. Ind. Manage Optim. 3(2), 293-304(2007).

\bibitem{Gao2006} Gao D.: Complete solutions and extremality criteria to polynomial optimization problems. J. Global
Optim. 35, 131-143(2006).

\bibitem{Zhou2014} Zhou X.J., Gao D.Y., Yang C.H.: Canonical primal-dual algorithm for solving fourth-order
polynomial minimization problems. Appl.Math.Comput. 227:246-255(2014).

\bibitem{Gao2012} Gao D., Ruan, N., Pardalos, P.: Canonical dual solutions to sum of fourth-order polynomials minimization problems with applications to sensor network localization. In: Boginski, V. L., Commander, C. W.,
Pardalos, P. M., Ye, Y. (eds.) Sensors: Theory, Algorithms, and Applications. Springer, Berlin, pp. 37-54
(2012).

\bibitem{Wang2012} Wang Z., Fang S., Gao D., Xing W.: Canonical dual approach to solving the maximum cut problem. J.
Global Optim. 54(2), 341-351(2012).





\bibitem{Gaobox2009b}Gao, D.Y., Ruan, N., Sherali, H.D.: Solutions and optimality criteria for nonconvex constrained global optimization problems with connections between canonical and Lagrangian duality. J. Global Optim. 45:473-497(2009).












 }

\end{thebibliography}
\end{document}